\newtheorem{theorem}{Theorem}[section]
\newtheorem{lemma}{Lemma}[section]
\newtheorem{remark}{Remark}[section]
\title{Analysis of the Discontinuous Petrov-Galerkin Method with Optimal Test Functions for the Reissner-Mindlin Plate Bending Model}
\author{Victor M. Calo}
\author{Nathaniel O. Collier}
\affil{King Abdullah University of Science and Technology (KAUST) \\
Center for Numerical Porous Media \\
Thuwal, Kingdom of Saudi Arabia \\
victor.calo@kaust.edu.sa, nathaniel.collier@kaust.edu.sa} 
\author{Antti H. Niemi}
\affil{Aalto University \\
School of Engineering \\
Department of Civil and Structural Engineering \\
Espoo, Finland \\
antti.h.niemi@aalto.fi}
\date{}
\begin{document}

\maketitle

\begin{abstract}
We analyze the discontinuous Petrov-Galerkin (DPG) method with optimal test functions when applied to solve the Reissner-Mindlin model of plate bending. We prove that the hybrid variational formulation underlying the DPG method is well-posed (stable) with a thickness-dependent constant in a norm encompassing the $L_2$-norms of the bending moment, the shear force, the transverse deflection and the rotation vector. We then construct a numerical solution scheme based on quadrilateral scalar and vector finite elements of degree $p$. We show that for affine meshes the discretization inherits the stability of the continuous formulation provided that the optimal test functions are approximated by polynomials of degree $p+3$. We prove a theoretical error estimate in terms of the mesh size $h$ and polynomial degree $p$ and demonstrate numerical convergence on affine as well as non-affine mesh sequences.
\end{abstract}

\emph{Keywords}: plate bending; finite element method; discontinuous Petrov-Galerkin; discrete stability; optimal test functions; error estimates

\section{Introduction}
Finite element methods based on the principle of virtual displacements are the most widely used tools for computing the deformations and stresses of elastic bodies under external loads. However, in the modelling of thin-walled structures, the basic formulation leads to so-called locking, or numerical over-stiffness, unless special techniques (reduced integration, nonconforming elements) are applied, see~\cite{Macneal1978,Hughes1981,Belytschko1983,Bathe1985}. Another difficulty related to the displacement based formulations is the stress recovery. It is well known that the accuracy of the stress field derived from the displacement field can be much lower than that of the displacement field. Therefore special recovery techniques are often applied to improve the accuracy of stress approximations, see \cite{Szabo2009,Niemi2012}. Practical finite element design relies heavily on heuristics, intuition, and engineering expertise which make numerical analysis of the formulations difficult, since the various physical and geometrical assumptions do not have obvious interpretations in the functional analytic setting required for mathematical error analysis.

Mixed formulations where stresses are declared as independent unknowns are attractive because they often avoid the problem of locking by construction and allow direct approximation of the quantities of interest. However, in contrast to pure displacement formulations, mixed finite element methods do not inherit stability from the continuous formulation, but the stability of the discretization must be independently verified for each particular choice of finite element spaces as in \cite{Arnold1981,Pitkaranta1988,Arnold1989,Brezzi1989,Pitkaranta1996,Chapelle1998,Pitkaranta2000,Arnold2005a,Chinosi2006}. 
The recently introduced discontinuous Petrov-Galerkin (DPG) variational framework provides means for automatic computation of test functions that guarantee discrete stability for any choice of trial functions, see \cite{Demkowicz2010,DemGopNUMPDE2010,Demkowicz2011,Niemi2011b,DemGopSJNA2011,Demkowicz2012,Niemi2011,ZitMugDemGopParCalJCP2011,Niemi2012a}.   

In this paper we provide an error analysis for the DPG method with optimal test functions when applied to the Reissner-Mindlin model of plate bending. We follow the error analysis program laid down in \cite{DemGopSJNA2011,Gopalakrishnan2011}. The stability analysis utilizes a duality argument based on the concept of optimal test space norm and is better suited to multidimensional problems than the earlier (see \cite{DemGopNUMPDE2010,Demkowicz2011,Niemi2011b,Demkowicz2012}) analysis technique based on deriving an explicit expression for the generalized energy norm. 

The unknowns in the (mesh-dependent) DPG formulation of the Reissner-Mindlin model are the shear force, bending moment, transverse deflection and rotation (field variables) as well as their suitable traces defined independently on the mesh skeleton. First, we show that the well-posedness and stability of the ideal DPG variational formulation follows from the well-posedness of the bending-moment formulation of the Reissner-Mindlin model which was established in \cite{Beiraodaveiga2012}, see also \cite{Amara2002,Behrens2010,Behrens2011}. We introduce then a quadrilateral finite element discretization where the field variables are approximated by piecewise polynomial functions of degree $p$ or $p+1$ on each element and the traces by piecewise polynomials of degree $p$ (resultant tractions) and $p+1$ (displacements) on the mesh skeleton. We prove that on affine meshes the discrete formulation is stable in the sense of Babu\v{s}ka and Brezzi provided that the optimal test functions are approximated by piecewise polynomials of degree $p+3$ on each element.

The stability estimate is derived using regular (mesh and thickness independent) Sobolev norms and the estimate breaks down at the Kirchhoff limit corresponding to vanishing shear strains. Our final error bounds are therefore inversely proportional to the slenderness of the plate. The analysis indicates that the slenderness dependency arises from the shear stress term. This observation is corroborated by the numerical experiments which reveal that the accuracy of the shear stress is indeed affected by the value of the thickness while the other quantities are rather independent of it.

The paper is structured as follows. The derivation of the hybrid ultra-weak variational formulation of the Reissner-Mindlin plate bending model is presented in the next Section. The wellposedness of the formulation is proved in Section \ref{sec:wellposedness}. The corresponding finite element method is introduced and analyzed in Section \ref{sec:approximate problem} and the results of our numerical experiments are shown in Section \ref{sec:numerical results}. The paper ends with conclusions and suggestions for future work in Section \ref{sec:conclusions}.

\section{Reissner-Mindlin Plate Bending Model} \label{sec:variational form}
\subsection{Strong Form}
Let $\Omega$ be a convex polygonal domain in $\Reals^2$ representing the middle surface of a plate. We take $L = \mathrm{diam}(\Omega)$ as the length unit and assume that the plate thickness $t$ is small as compared with unity, that is the plate is thin. In the Reissner-Mindlin model, the deformation of the plate is described in terms of the transverse deflection $w$ and the rotation vector $\vpsi$, both defined on the middle surface $\Omega$. In the case of linearly elastic, homogeneous, and isotropic material, the shear force vector $\mV$ and the bending moment tensor $\mM$ are related to the displacements as (see for instance \cite{Ventsel2001})
\begin{equation} \label{eqn:constitutive laws}
\mV = \kappa G t(\vnabla w - \vpsi), \quad \mM = D t^3 \left[(1-\nu)\mepsilon(\vpsi) + \nu \trace(\mepsilon(\vpsi)) \eye \right],
\end{equation}
where $\eye$ is the identity tensor and $\mepsilon(\vpsi) = \frac{1}{2}(\vnabla\vpsi+\vnabla\vpsi^T)$ denotes the symmetric gradient. Moreover, 
\[
G = \frac{E}{2(1+\nu)}, \quad D = \frac{E}{12(1-\nu^2)}
\]
are the elastic material parameters written in terms of Young's modulus $E$ and Poisson's ratio $\nu$ while $\kappa >0$ is an additional model parameter called the shear correction factor. The fundamental balance laws of static equilibrium are
\begin{equation} \label{eqn:balance laws}
-\vnabla \cdot \mV = p, \quad - \vnabla \cdot \mM - \mV = \boldsymbol{0} ,
\end{equation}
where $p$ represents a transversal bending load. 

Upon rescaling the static quantities as 
\[
p \hookrightarrow Gt^3 p, \quad \mV \hookrightarrow Gt^3\mV, \quad \mM \hookrightarrow Gt^3 \mM,
\]
introducing the auxiliary variable $\vomega = \frac{1}{2}(\vnabla \vpsi - \vnabla \vpsi^T)$, and inverting the definition of $\mM$ in \eqref{eqn:constitutive laws} we arrive at the Reissner-Mindlin system
\begin{equation} \label{eqn:reissner-mindlin system}
\begin{alignedat}{2}
\kappa^{-1}t^2 \mV - \vnabla w + \vpsi &= \boldsymbol{0}, & \qquad
\cC^{-1} \mM - \vnabla \vpsi + \vomega &= \boldsymbol{0}, \\
- \vnabla \cdot \mV &= p, & \qquad
- \vnabla \cdot \mM - \mV &= \boldsymbol{0},
\end{alignedat}
\end{equation}
where
\[
\cC^{-1} \mtau= 6 \left(\mtau - \frac{\nu}{1+\nu} \trace(\mtau) \eye\right)
\]
is the two-dimensional ``compliance'' tensor.

\subsection{Hybrid Ultra-weak Form}
We use the usual Sobolev spaces $H^s(X)$ of scalar-valued functions defined on a domain $X \subset \Reals^2$ and boldface font for the vector- and tensor-valued analogues. As usual, $H^0(X) = L_2(X)$. Accordingly, we make use of the space $H(\mathrm{div},X)$ consisting of vector fields in $\bL_2(X)$ with divergence in $L_2(X)$ and denote by $\bH(\mathbf{div},X)$ the corresponding space of tensor-valued functions with rows in $H(\mathrm{div},X)$ (the divergence of a tensor is taken row-wise).

Let $\{\Omega_h\}$ be a non-degenerate family of partitions of $\Omega$ into convex quadrilaterals, where $h$ refers to the maximum element diameter in $\Omega_h$. Integration of the system \eqref{eqn:reissner-mindlin system} by parts over a single element $K$ in $\Omega_h$ gives
\begin{equation} \label{eqn:local ultra-weak form}
\begin{alignedat}{2}
\kappa^{-1}t^2 \scp{\mV}{\vq}_K + \scp{w}{\vnabla \cdot \vq}_K - \dual{w}{\vq \cdot \vn}_{\partial K} + \scp{\vpsi}{\vq}_K &= 0 & \quad &\forall \vq \in H(\mathrm{div},K) \\
\scp{\cC^{-1} \mM}{\mtau}_K + \scp{\vpsi}{\vnabla \cdot \mtau}_K - \dual{\vpsi}{\mtau \vn}_{\partial K} + \scp{r\mJ}{\mtau}_K &= 0 & \quad &\forall \, \mtau \in \boldsymbol{H}(\mathbf{div},K) \\
\scp{\mV}{\vnabla z}_K - \dual{z}{\mV \cdot \vn}_{\partial K} &= \scp{p}{z}_K & \quad &\forall z \in H^1(K) \\
\scp{\mM}{\vnabla \vphi}_K - \dual{\vphi}{\mM\vn}_{\partial K} - \scp{\mV}{\vphi}_K &= 0 & \quad &\forall \vphi \in \boldsymbol{H}^1(K) \\
\scp{\mM}{s\mJ}_K &= 0 & \quad &\forall s \in L_2(\Omega)
\end{alignedat}
\end{equation}
where $\vn$ denotes the outward unit normal on $\partial K$. The standard $L_2$ inner product of scalar-, vector- or tensor-valued functions over $K$ and $\partial K$ have been denoted by $\scp{\cdot}{\cdot}_K$ and $\dual{\cdot}{\cdot}_{\partial K}$, respectively. Moreover, the vorticity has been represented as a single unknown $\vomega = r\mJ$, where
\[
\mJ = \begin{bmatrix}
0 & 1 \\
-1 & 0
\end{bmatrix}
\]
and the equilibrium condition $M_{12}=M_{21}$ has been imposed weakly using the same notation.

The next step in developing the DPG formulation is to declare the traces $(w,\vpsi,\mV \cdot \vn,\mM\vn)\!\!\mid_{\partial K}$ as indepedent unknowns by rewriting the boundary terms as
\[
\begin{aligned}
\dual{w}{\vq \cdot \vn}_{\partial K} &\hookrightarrow \dual{\hat{w}}{\vq \cdot \vn}_{1/2,\partial K} \\
\dual{\vpsi}{\mtau\vn}_{\partial K} &\hookrightarrow \dual{\hat{\vpsi}}{\mtau \vn}_{1/2,\partial K} \\
\dual{z}{\mV \cdot \vn}_{\partial K} &\hookrightarrow \dual{z}{\hat{V}_n}_{1/2,\partial K} \\
\dual{\vphi}{\mM \vn}_{\partial K} &\hookrightarrow \dual{\vphi}{\hat{\mM}_n}_{1/2,\partial K} \\
\end{aligned}
\]
where $\dual{\cdot}{\ell}_{1/2,\partial K}$ denotes the action of a functional $\ell$ in $H^{-1/2}$ acting on scalar- or vector-valued functions.

The boundary conditions for a clamped boundary are $\hat{\vpsi} = \boldsymbol{0}$, $\hat{w} = 0$ on $\partial \Omega$ and the final variational form of the problem is obtained by summing \eqref{eqn:local ultra-weak form} over each $K$ in $\Omega_h$. The problem is to find $\vu = (\mV,\mM,w,\vpsi,r,\hat{w},\hat{\vpsi},\hat{V}_n,\hat{\mM}_n) \in \cbU$ such that
\begin{equation} \label{eqn:variational problem}
\cB(\vu,\vv) = \cL(\vv) \quad \forall \, \vv = (\vq,\mtau,z,\vphi,s) \in \cbV
\end{equation}
where the functional spaces are defined formally as
\begin{equation} \label{eqn:definition of spaces}
\begin{aligned}
\cbU &= \boldsymbol{L}_2(\Omega) \times \boldsymbol{L}_2(\Omega) \times L_2(\Omega) \times \boldsymbol{L}_2(\Omega) \times L_2(\Omega) \\
&\times H_0^{1/2}(\partial \Omega_h) \times \boldsymbol{H}^{1/2}_0(\partial \Omega_h) \times H^{-1/2}(\partial \Omega_h) \times \boldsymbol{H}^{-1/2}(\partial \Omega_h) \\
\cbV &= H(\mathrm{div},\Omega_h) \times \boldsymbol{H}(\mathbf{div},\Omega_h) \times H^1(\Omega_h) \times \boldsymbol{H}^1(\Omega_h) \times L_2(\Omega)
\end{aligned}
\end{equation}
and the bilinear and linear forms are given by
\begin{equation} \label{eqn:definition of forms}
\begin{aligned}
\cB(\vu,\vv) &= \scp{\mV}{\kappa^{-1}t^2 \vq + \vnabla z - \vphi}_{\Omega_h} + \scp{\mM}{\cC^{-1}\mtau + \vnabla \vphi +s\mJ}_{\Omega_h} \\
&+ \scp{w}{\vnabla \cdot \vq}_{\Omega_h} + \scp{\vpsi}{\vq + \vnabla \cdot \mtau}_{\Omega_h} + \scp{r \mJ}{\mtau}_{\Omega_h} - \dual{\hat{w}}{\vq \cdot \vn}_{\partial \Omega_h} - \dual{\hat{\vpsi}}{\mtau \vn}_{\partial \Omega_h} \\
&- \dual{z}{\hat{V}_n}_{\partial \Omega_h} - \dual{\vphi}{\hat{\mM}_n}_{\partial \Omega_h} \\
\cL(\vv) &= \scp{p}{z}_{\Omega_h}
\end{aligned}
\end{equation}
Here we have adopted the notation of \cite{Gopalakrishnan2011} for elementwise computations of the derivatives on the triangulation $\Omega_h$ and its skeleton $\partial \Omega_h$:
\[
\scp{\cdot}{\cdot}_{\Omega_h} = \sum_{K \in \Omega_h} \scp{\cdot}{\cdot}_K, \quad \dual{\cdot}{\cdot}_{\partial \Omega_h} = \sum_{K \in \Omega_h} \dual{\cdot}{\cdot}_{1/2,\partial K}
\]
The broken Sobolev spaces in \eqref{eqn:definition of spaces} are defined as
\[
\begin{aligned}
H^1(\Omega_h) &= \{ v \in L_2(\Omega) \; : \; v\!\!\mid_K \in H^1(K) \; \forall K \in \Omega_h \} \\
\bH(\mathrm{div},\Omega_h) &= \{ \vq \in \bL_2(\Omega) \; : \; \vq\!\!\mid_K \in \bH(\mathrm{div},K) \; \forall K \in \Omega_h \} \\
\end{aligned}
\]
whereas the fractional Sobolev spaces $H^{1/2}_0(\partial \Omega_h)$ and $H^{-1/2}(\partial \Omega_h)$ are interpreted as the trace spaces of functions in $H^1_0(\Omega)$ and $\bH(\mathrm{div},\Omega)$ on the skeleton $\partial \Omega_h$:
\[
\begin{aligned}
H^{1/2}_0(\partial\Omega_h) &= \{ v\!\!\mid_{\partial\Omega_h} \; : \; v \in H^1_0(\Omega) \} \\
H^{-1/2}(\partial\Omega_h) &= \{ \veta \cdot \vn \!\!\mid_{\partial\Omega_h} \; : \; \veta \in \bH(\mathrm{div},\Omega) \}
\end{aligned}
\]
The norms in the spaces $H^{1/2}_0(\partial \Omega_h)$ and $H^{-1/2}(\partial \Omega_h)$ can be defined as 
\[
\begin{aligned}
\norm{\hat{u}}_{H_0^{1/2}(\partial \Omega_h)} &= \inf_{v \in H^1_0(\Omega)} \{ \norm{v}_{H^1(\Omega)} : \gamma_0(v) = \hat{u} \} \\
\norm{\hat{\eta}_n}_{H^{-1/2}(\partial \Omega_h)} &= \inf_{\veta \in \bH(\mathrm{div},\Omega)} \{ \norm{\veta}_{\bH(\mathrm{div},\Omega)} : \vgamma_{\vn}(\veta) = \hat{\eta}_n \}
\end{aligned}
\]
where $\gamma_0$ and $\vgamma_{\vn}$ denote the trace operators satisfying $\gamma_0(v) = v\!\!\mid_{\partial \Omega_h}$ and $\vgamma_{\vn}(\veta) = \veta \cdot \vn\!\!\mid_{\partial \Omega_h}$ for all $v \in \cC^1(\bar{\Omega})$ and $\veta \in \boldsymbol{\cC}^1(\bar{\Omega})$, respectively.

\section{Well-posedness of the Ultra-Weak Formulation} \label{sec:wellposedness}
We begin with the following formulation of the Babu\v{s}ka-Lax-Milgram theorem and include the proof for completeness.
\begin{theorem} \label{theorem:babuska-lax-milgram}
Assume that $\cbU$ and $\cbV$ are two Hilbert spaces and $\cB(\vu,\vv)$ is a bilinear form on $\cbU \times \cbV$ satisfying
\begin{align}
\cB(\vu,\vv) &\leq C \norm{\vu}_{\cbU}\norm{\vv}_{\cbV} \quad \forall \vu \in \cbU, \vv \in \cbV 
\label{eqn:continuity of B} \\
\sup_{\vu \in \cbU} \frac{\cB(\vu,\vv)}{\norm{\vu}_{\cbU}} &\geq \alpha \norm{\vv}_{\cbV} \quad \forall \vv \in \cbV
\label{eqn:BB for the adjoint} \\
\cB(\vu,\vv) &= 0 \quad \forall \vv \in \cbV \quad \Rightarrow \quad \vu = \boldsymbol{0}
\label{eqn:injectivity}
\end{align}
If $\cL \in \cbV'$, that is $\cL$ is a linear functional on $\cbV$, there exists a unique $\vu \in \cbU$ such that 
\[
\cB(\vu,\vv) = \cL(\vv) \quad \forall \vv \in \cbV
\]
and
\[
\norm{\vu}_{\cbU} \leq \frac{\norm{\cL}}{\alpha}
\]
\end{theorem}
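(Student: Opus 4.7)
The plan is to reduce the problem to operator theory on Hilbert spaces via the Riesz representation theorem. By continuity \eqref{eqn:continuity of B}, the map $v \mapsto \mathcal{B}(u,v)$ is a bounded linear functional on $\mathcal{V}$ for each $u \in \mathcal{U}$, so there is a unique $Bu \in \mathcal{V}$ with $(Bu,v)_{\mathcal{V}} = \mathcal{B}(u,v)$; this defines a bounded linear operator $B:\mathcal{U} \to \mathcal{V}$ with $\|B\| \leq C$. Similarly, $\mathcal{L}$ is represented by some $\ell \in \mathcal{V}$ with $\|\ell\|_{\mathcal{V}} = \|\mathcal{L}\|$, so the problem becomes: find $u \in \mathcal{U}$ with $Bu = \ell$.

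Next I would rewrite the inf-sup condition \eqref{eqn:BB for the adjoint} in terms of the Hilbert-space adjoint $B^{\ast}:\mathcal{V} \to \mathcal{U}$. Since $\mathcal{B}(u,v) = (u, B^{\ast}v)_{\mathcal{U}}$, one has
\[
\sup_{u \in \mathcal{U}} \frac{\mathcal{B}(u,v)}{\|u\|_{\mathcal{U}}} = \|B^{\ast}v\|_{\mathcal{U}},
\]
so \eqref{eqn:BB for the adjoint} is exactly the bounded-below estimate $\|B^{\ast}v\|_{\mathcal{U}} \geq \alpha \|v\|_{\mathcal{V}}$ for all $v \in \mathcal{V}$.

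From this bounded-below property I can derive that $B$ is bijective. First, $B^{\ast}$ is injective with closed range, and by the closed range theorem this implies that $B$ also has closed range. Second, $\ker(B^{\ast}) = \{0\}$ together with the orthogonal decomposition $\mathcal{V} = \overline{\operatorname{range}(B)} \oplus \ker(B^{\ast})$ shows that $\operatorname{range}(B)$ is dense, hence equal to $\mathcal{V}$; thus $B$ is surjective. Injectivity of $B$ follows directly from hypothesis \eqref{eqn:injectivity}, since $Bu = 0$ means $\mathcal{B}(u,v) = 0$ for every $v$. Therefore $B$ is a bounded bijection and the equation $Bu = \ell$ has a unique solution.

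For the norm bound, the bounded-below estimate for $B^{\ast}$ says $(B^{\ast})^{-1}$ exists on $\operatorname{range}(B^{\ast}) = \mathcal{U}$ with $\|(B^{\ast})^{-1}\| \leq 1/\alpha$, and by taking adjoints $\|B^{-1}\| \leq 1/\alpha$ as well. Consequently
\[
\|u\|_{\mathcal{U}} = \|B^{-1}\ell\|_{\mathcal{U}} \leq \tfrac{1}{\alpha}\|\ell\|_{\mathcal{V}} = \tfrac{1}{\alpha}\|\mathcal{L}\|,
\]
which is the claimed estimate. The only genuinely delicate point is the closed-range/density argument that turns the one-sided inf-sup bound \eqref{eqn:BB for the adjoint} into surjectivity of $B$; everything else is bookkeeping with Riesz and adjoints.
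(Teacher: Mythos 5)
Your argument is correct, and it reaches the conclusion by a route that overlaps with but does not coincide with the paper's. Both proofs start identically: Riesz representation turns $\cB$ into a bounded operator ($\mT$ in the paper, your $B$) with adjoint $\mT^{*}=B^{*}$, and both observe that \eqref{eqn:BB for the adjoint} is exactly the bounded-below estimate $\norm{B^{*}\vv}_{\cbU}\geq\alpha\norm{\vv}_{\cbV}$. From there the paths diverge. The paper stays entirely on the adjoint side: it proves by hand (via a Cauchy-sequence argument) that the range of $\mT^{*}$ is closed, uses hypothesis \eqref{eqn:injectivity} to conclude $\mT^{*}$ is surjective, deduces the second inf-sup condition $\sup_{\vv}\cB(\vu,\vv)/\norm{\vv}_{\cbV}\geq\alpha\norm{\vu}_{\cbU}$, and then hands the existence, uniqueness and stability bound to Babu\v{s}ka's theorem as a cited black box. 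You instead establish bijectivity of $B$ directly — surjectivity from the closed range theorem plus the orthogonal decomposition $\cbV=\overline{\mathrm{range}(B)}\oplus\ker(B^{*})$, injectivity from \eqref{eqn:injectivity} — and extract the bound from $\norm{B^{-1}}=\norm{(B^{*})^{-1}}\leq 1/\alpha$, which makes the proof self-contained at the price of invoking the closed range theorem (a heavier tool than the paper's elementary closedness argument). One small point you gloss over: the identity $\mathrm{range}(B^{*})=\cbU$ that you use for the norm bound is not a consequence of the bounded-below estimate alone; it needs $\mathrm{range}(B^{*})^{\perp}=\ker(B)=\{0\}$, i.e.\ a second appeal to hypothesis \eqref{eqn:injectivity}, together with the closedness of $\mathrm{range}(B^{*})$. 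This is precisely the step the paper makes explicit when it shows $\mT^{*}$ is surjective, so you should say it rather than assert it. With that sentence added, your proof is complete and arguably cleaner for a reader who does not want to look up Babu\v{s}ka's original statement.
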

\begin{proof}
We show that the above assumptions guarantee that also the inf-sup condition
\begin{equation} \label{eqn:BB condition}
\sup_{\vv \in \cbV} \frac{\cB{(\vu,\vv)}}{\norm{\vv}_{\cbV}} \geq \alpha \norm{\vu}_{\cbU}  \quad \forall \vu \in \cbU
\end{equation}
holds. The assertion follows then from the Babu\v{s}ka-Lax-Milgram Theorem, see \cite[Theorem 2.1]{BabNUMERMATH1971}. To prove \eqref{eqn:BB condition} we define $\mT: \cbU \rightarrow \cbV$ and $\mT^*: \cbV \rightarrow \cbU$ through
\[
\cB(\vu,\vv) = \scp{\mT\vu}{\vv}_{\cbV} = \scp{\vu}{\mT^*\vv}_{\cbU}
\]
It follows from \eqref{eqn:continuity of B} and the Riesz Representation Theorem that $\mT$ and $\mT^*$ are continuous and that \eqref{eqn:BB for the adjoint} is equivalent to
\begin{equation} \label{eqn:BB of the adjoint 2}
\norm{\mT^* \vv}_{\cbU} \geq \alpha \norm{\vv}_{\cbV} \quad \forall \vv \in \cbV
\end{equation}
We show next that the range of $\mT^*$ is closed. Namely, if $\{\mT^* \vv_n\} \in \cbU$ is a Cauchy sequence, then so is $\{\vv_n\} \in \cbV$ because \eqref{eqn:BB for the adjoint} implies that
\[
\norm{\vv_m-\vv_n}_{\cbV} \leq \alpha \norm{\mT^*(\vv_m-\vv_n)}_{\cbU} = \alpha \norm{\mT^*\vv_m - \mT^*\vv_n}_{\cbU}
\]
Therefore $\{\vv_n\}$ converges to some $\vv \in \cbV$. Because $\mT^*$ is continuous $\{\mT^*\vv_n\}$ converges to $\mT^*\vv$ which proves that $\overline{\mT^*(\cbV)} = \mT^*(\cbV)$.

The condition \eqref{eqn:injectivity} implies now that $\mT^*$ is surjective. If this was not true, there would exist a non-zero $\tilde{\vu} \in \cbU$ such $\cB(\tilde{\vu},\vv)=\scp{\tilde{\vu}}{\mT^*\vv} = 0$ for every $\vv \in \cbV$. However, this contradicts \eqref{eqn:injectivity} so that we must have $\mT^*(\cbV) = \cbU$ which together with \eqref{eqn:BB of the adjoint 2} implies \eqref{eqn:BB condition}:
\[
\sup_{\vv \in \cbV} \frac{\cB(\vu,\vv)}{\norm{\vv}_{\cbV}} = \sup_{\vv \in \cbV} \frac{\scp{\vu}{\mT^*\vv}_{\cbU}}{\norm{\vv}_{\cbV}} \geq \sup_{\vv \in \cbV} \frac{\scp{\vu}{\mT^*\vv}_{\cbU}}{\alpha^{-1}\norm{\mT^*\vv}_{\cbU}} = \alpha \sup_{\vw \in \cbU} \frac{\scp{\vu}{\vw}_{\cbU}}{\norm{\vw}_{\cbU}} = \alpha \norm{\vu}_{\cbU}.
\]
\end{proof}

\subsection{Uniqueness of the Solution}
\begin{lemma} \label{lemma:uniqueness}
Let the spaces $\cbU,\cbV$ and the bilinear form $\cB(\vu,\vv)$ be as defined in Equations \eqref{eqn:definition of spaces} and \eqref{eqn:definition of forms}, respectively. If $\vu \in \cbU$ satisfies
\begin{equation} \label{eqn:zero-condition}
\cB(\vu,\vv) = 0
\end{equation}
for every $\vv \in \cbV$, then $\vu = \boldsymbol{0}$.
\end{lemma}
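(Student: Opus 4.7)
The plan is to argue in two stages. First, I would use test functions supported in a single element with vanishing element-boundary traces to recover the strong Reissner-Mindlin equations elementwise and to pick up the piecewise regularity of the field variables. Then, with arbitrary test functions and integration by parts, I would identify the independent trace unknowns with the actual traces of the field variables, thereby reducing $\vu$ to a solution of the homogeneous clamped Reissner-Mindlin boundary-value problem on $\Omega$; the classical well-posedness of that problem then forces $\vu = \boldsymbol{0}$.

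For the first stage, I fix $K \in \Omega_h$ and activate one slot of $\vv$ at a time while keeping the others zero. Taking $\vq \in H(\mathrm{div},K)$ with $\vq \cdot \vn = 0$ on $\partial K$ (extended by zero to $\Omega$) kills the $\hat{w}$-trace contribution and leaves
\[
\scp{w}{\vnabla \cdot \vq}_K + \scp{\kappa^{-1} t^2 \mV + \vpsi}{\vq}_K = 0,
\]
which identifies $\vnabla w = \kappa^{-1} t^2 \mV + \vpsi$ distributionally on $K$; since the right-hand side lies in $\bL_2(K)$, this yields $w\!\mid_K \in H^1(K)$. Analogous choices $\mtau \in \boldsymbol{H}(\mathbf{div},K)$ with $\mtau \vn = \boldsymbol{0}$, $z \in H^1_0(K)$, and $\vphi \in \boldsymbol{H}^1_0(K)$ deliver $\vpsi\!\mid_K \in \boldsymbol{H}^1(K)$ with $\vnabla \vpsi = \cC^{-1} \mM + r \mJ$, $\mV\!\mid_K \in H(\mathrm{div},K)$ with $\vnabla \cdot \mV = 0$, and $\mM\!\mid_K \in \boldsymbol{H}(\mathbf{div},K)$ with $\vnabla \cdot \mM + \mV = \boldsymbol{0}$. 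Finally, testing with $s \in L_2(\Omega)$ forces $\mM : \mJ = 0$ almost everywhere, so $\mM$ is symmetric.

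For the second stage, I insert an arbitrary $\vq \in \bH(\mathrm{div},\Omega_h)$ and integrate $\scp{w}{\vnabla \cdot \vq}_K$ by parts on each element, now legitimate because $w\!\mid_K \in H^1(K)$. The volume contributions cancel via the stage-one identities and what remains is $\dual{w - \hat{w}}{\vq \cdot \vn}_{\partial \Omega_h} = 0$; surjectivity of the normal-trace operator on each $\partial K$ then forces $\hat{w} = w\!\mid_{\partial K}$. Single-valuedness of $\hat{w}$ on interior edges yields continuity of $w$ across edges, and $\hat{w} \in H^{1/2}_0(\partial \Omega_h)$ promotes this to $w \in H^1_0(\Omega)$. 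Parallel computations using $\mtau$, $z$, and $\vphi$ produce $\hat{\vpsi} = \vpsi\!\mid_{\partial K}$, $\hat{V}_n = \mV \cdot \vn$, $\hat{\mM}_n = \mM \vn$, together with $\vpsi \in \boldsymbol{H}^1_0(\Omega)$, $\mV \in \bH(\mathrm{div},\Omega)$, $\mM \in \boldsymbol{H}(\mathbf{div},\Omega)$.

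With these identifications, $(\mV, \mM, w, \vpsi, r)$ solves the homogeneous system \eqref{eqn:reissner-mindlin system} on $\Omega$ under clamped boundary conditions $w = 0$, $\vpsi = \boldsymbol{0}$. The well-posedness of the bending-moment formulation of the Reissner-Mindlin model cited in \cite{Beiraodaveiga2012} (or, equivalently, a short energy argument testing the force and moment balances against $w$ and $\vpsi$ and using the positive definiteness of $\cC^{-1}$ on symmetric tensors together with the clamped boundary conditions) then forces all field variables to vanish, whence the trace components follow as well. The main obstacle I anticipate is the rigorous justification, in the second stage, of the passage from $\dual{w - \hat{w}}{\vq \cdot \vn}_{\partial \Omega_h} = 0$ and its counterparts to pointwise trace equality in the appropriate fractional Sobolev sense: this requires the surjectivity of the normal and Dirichlet trace operators on each $\partial K$ and a careful bookkeeping of the $H^{-1/2}$ pairings attached to $\hat{V}_n$ and $\hat{\mM}_n$; the rest reduces to routine manipulations.
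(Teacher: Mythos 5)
Your two-stage plan is essentially the paper's proof: elementwise testing to recover the strong system and the piecewise regularity of $(\mV,\mM,w,\vpsi)$, then integration by parts to identify the independent traces with the actual traces and upgrade to global regularity with the clamped conditions $w \in H^1_0(\Omega)$, $\vpsi \in \boldsymbol{H}^1_0(\Omega)$, $\mV \in \bH(\mathrm{div},\Omega)$, $\mM \in \boldsymbol{H}(\mathbf{div},\Omega)$. The paper then finishes with exactly the energy argument you sketch in parentheses (it substitutes $\vq=\mV$, $\mtau=\mM$, $z=w$, $\vphi=\vpsi$ into the hybrid system and shows the boundary terms vanish, arriving at $\kappa^{-1}t^2\scp{\mV}{\mV}_{\Omega_h}+\scp{\cC^{-1}\mM}{\mM}_{\Omega_h}=0$), rather than citing the well-posedness of the global bending-moment formulation wholesale; both routes are legitimate, and yours is arguably cleaner once the trace identifications are in hand.

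The one substantive omission is the vorticity $r$. Your energy argument, as described, kills $\mM$ and $\mV$ (and then $w$, $\vpsi$) but says nothing about $r$, which is an independent unknown in $\cbU$ and must be shown to vanish. The paper handles this with a separate construction: for any $r\in L_2(\Omega)$ there is $\mtau^r\in\boldsymbol{H}(\mathbf{div},\Omega)$ with $\vnabla\cdot\mtau^r=\boldsymbol{0}$ and $\tau^r_{12}-\tau^r_{21}=r$, and testing the moment equation with $\mtau^r$ gives $\norm{r}^2_{L_2(\Omega)}=0$. Alternatively, once $\mM=\boldsymbol{0}$, the identity $\vnabla\vpsi=\cC^{-1}\mM+r\mJ=r\mJ$ shows $\mepsilon(\vpsi)=\boldsymbol{0}$, so $\vpsi$ is an infinitesimal rigid rotation vanishing on $\partial\Omega$, hence $\vpsi=\boldsymbol{0}$ and $r=0$. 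If instead you rely on the cited well-posedness result of the bending-moment formulation, $r$ is covered since it appears in that formulation's norm, but then you should say so explicitly; as written, the parenthetical ``short energy argument'' does not close the case for $r$.
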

\begin{proof} 
Equation \eqref{eqn:zero-condition} implies that on every mesh element $K$ we have
\begin{equation} \label{eqn:weak system for injectivity}
\begin{alignedat}{2}
\kappa^{-1}t^2 \scp{\mV}{\vq}_K + \scp{w}{\vnabla \cdot \vq}_K - \dual{\hat{w}}{\vq \cdot \vn}_{\partial K} + \scp{\vpsi}{\vq}_K &= 0 & \quad &\forall \vq \in \bH(\mathrm{div},K) \\
\scp{\cC^{-1} \mM}{\mtau}_K + \scp{\vpsi}{\vnabla \cdot \mtau}_K - \dual{\hat{\vpsi}}{\mtau \vn}_{\partial K} + \scp{r\mJ}{\mtau}_K &= 0 & \quad &\forall \mtau \in \boldsymbol{H}(\mathbf{div},K) \\
\scp{\mV}{\vnabla z}_K - \dual{z}{\hat{V}_n}_{\partial K} &= 0 & \quad &\forall z \in H^1(K) \\
\scp{\mM}{\vnabla \vphi}_K - \dual{\vphi}{\hat{\mM}_n}_{\partial K} - \scp{\mV}{\vphi}_K &= 0 & \quad &\forall \vphi \in \boldsymbol{H}^1(K) \\
\scp{\mM}{s\mJ}_K &= 0 & \quad &\forall s \in L_2(K)
\end{alignedat}
\end{equation}
Testing with infinitely differentiable functions which are non-zero only on a compact subset of $K$ reveals that
\begin{equation} \label{eqn:distributional system for injectivity}
\begin{aligned}
\kappa^{-1}t^2 \mV - \vnabla w + \vpsi &= \boldsymbol{0} \\
\cC^{-1} \mM - \vnabla \vpsi + r \mJ &= \boldsymbol{0} \\
- \vnabla \cdot \mV &= 0 \\
- \vnabla \cdot \mM - \mV &= \boldsymbol{0}
\end{aligned}
\end{equation}
in every $K$ in the distributional sense. These equations in turn imply that $\mV \in \bH(\mathrm{div},K)$, $\mM \in \boldsymbol{H}(\mathbf{div},K)$ and $w \in H^1(K)$, $\vpsi \in \boldsymbol{H}^1(K)$.

We also have
\begin{equation} \label{eqn:compatibility of traces}
w\!\mid_{\partial K} = \hat{w}\!\mid_{\partial K},  \; \vpsi\!\mid_{\partial K}= \hat{\vpsi}\!\mid_{\partial K} 
\quad \text{and} \quad
\hat{V}_n\!\mid_{\partial K} = \mV \cdot \vn \!\mid_{\partial K},  \; \hat{\mM}_n\!\mid_{\partial K}= \mM\vn\!\mid_{\partial K} 
\end{equation}
This can be seen by integrating each equation in \eqref{eqn:weak system for injectivity} by parts and using the corresponding identity in \eqref{eqn:distributional system for injectivity} to show that
\begin{equation} \label{eqn:weak compatibility of traces}
\begin{alignedat}{2}
\dual{w-\hat{w}}{\vq \cdot \vn}_{1/2,\partial K} &= 0 &\quad &\forall \vq \in \bH(\mathrm{div},K) \\
\dual{\vpsi-\hat{\vpsi}}{\mtau \vn}_{1/2,\partial K} &= 0 &\quad &\forall \mtau \in \boldsymbol{H}(\mathbf{div},K) \\
\dual{z}{\mV\cdot \vn -\hat{V}_n}_{1/2,\partial K} &= 0 &\quad &\forall z \in H^1(K)  \\
\dual{\vphi}{\mM\vn-\hat{\mM}_n}_{1/2,\partial K} &= 0 &\quad &\forall \vphi \in \boldsymbol{H}^1(K)
\end{alignedat}
\end{equation}
These equations imply that $\mM \in \boldsymbol{H}(\mathbf{div},\Omega)$, $\mV \in \boldsymbol{H}(\mathrm{div},\Omega)$, and that $w \in H^1_0(\Omega)$, $\vpsi \in \boldsymbol{H}^1_0(\Omega)$ because $\hat{w}\!\mid_{\partial \Omega}=0$ and $\hat{\vpsi}\!\mid_{\partial \Omega} = \boldsymbol{0}$.

The extra regularity allows us to set $\mtau = \mM$, $\vq = \mV$ and $z = w$, $\vphi = \vpsi$ in \eqref{eqn:weak system for injectivity}. Summing the equations together and over every element, we find after integration by parts and simplification that
\begin{equation} \label{eqn:injectivity condition}
\kappa^{-1}t^2 \scp{\mV}{\mV}_{\Omega_h} + \dual{w-\hat{w}}{\mV \cdot \vn}_{\partial \Omega_h} + \scp{\cC^{-1}\mM}{\mM}_{\Omega_h} + \dual{\vpsi-\hat{\vpsi}}{\mM\vn}_{\partial \Omega_h} - \dual{w}{\hat{V}_n}_{\partial \Omega_h} - \dual{\vpsi}{\hat{\mM}_n}_{\partial \Omega_h} = 0
\end{equation}
The second and fourth terms vanish due to \eqref{eqn:weak compatibility of traces}. The last two terms vanish as well. To see this, we use~\eqref{eqn:compatibility of traces} and integrate by parts first locally and then globally (allowed by the regularity of $\mV,w,\mM,\vpsi$) to find that
\begin{equation} \label{eqn:vanishing jumps reasoning}
\begin{aligned}
\dual{w}{\hat{V}_n}_{\partial \Omega_h} &= \dual{w}{\mV \cdot \vn}_{\partial \Omega_h} \\
&=\scp{\vnabla w}{\mV}_{\Omega_h} - \scp{w}{\vnabla \cdot \mV}_{\Omega_h}  \\
&=\scp{\vnabla w}{\mV}_{\Omega} - \scp{w}{\vnabla \cdot \mV}_{\Omega} \\
&= \dual{w}{\mV \cdot \vn}_{\partial\Omega}
\end{aligned}
\end{equation}
Now the global boundary condition of $w \in H_0^1(\Omega)$ implies that $\dual{w}{\hat{V}_n}_{\partial \Omega_h} = 0$. A similar reasoning and the assumption $\vpsi = \boldsymbol{H}_0^1(\Omega)$ show that $\dual{\vpsi}{\hat{\mM}_n}_{\partial \Omega_h} = 0$.

Consequently, it follows from \eqref{eqn:injectivity condition} that $\mV$ and $\mM$ must be zero. To proceed further, we recall (see for example~\cite[Section VI]{Braess2001}) that for every $r \in L_2(\Omega)$, there exists a $\mtau^r \in \boldsymbol{H}(\mathbf{div},\Omega)$ such that $\vnabla \cdot \mtau^r = \boldsymbol{0}$ and $\tau^r_{12}-\tau^r_{21} = r$. We select $\mtau = \mtau^r$ in the second equation of \eqref{eqn:weak system for injectivity} and sum over the elements to conclude as in \eqref{eqn:vanishing jumps reasoning} that
\[
\begin{aligned}
\scp{r}{r}_{\Omega_h} &= \dual{\hat{\vpsi}}{\mtau^r\vn}_{\partial\Omega_h} =\dual{\vpsi}{\mtau^r\vn}_{\partial\Omega} =0
\end{aligned}
\]
Thus, $r=0$. 

Since $\mM$ and $r$ are already known to vanish, the second equation in \eqref{eqn:distributional system for injectivity} implies that $\vpsi$ is constant. Since $\vpsi \in \boldsymbol{H}_0^1(\Omega)$ we find that $\vpsi = \boldsymbol{0}$. The first equation in \eqref{eqn:distributional system for injectivity} implies then similarly that $w = 0$. Finally \eqref{eqn:compatibility of traces} shows that also the traces $\hat{w}$, $\hat{\vpsi}$ and $\hat{V}_n$, $\hat{\mM}_n$ are zero. Thus, all components in $\vu$ are shown to vanish and the proof is finished.

\end{proof}

\subsection{Existence of the Solution}
In the DPG terminology, the supremum in the condition \eqref{eqn:BB for the adjoint} is called the optimal test space norm:
\[
\enorm{\vv}_{\cbV} = \sup_{\vu \in \cbU} \frac{\cB(\vu,\vv)}{\norm{\vu}_{\cbU}}.
\]
In the current application it can be expressed in the form
\begin{equation}
\begin{aligned}
\enorm{\vv}_{\cbV}^2 &= \norm{\kappa^{-1}t^2 \vq + \vnabla z - \vphi}_{\Omega_h}^2 + \norm{\cC^{-1} \mtau + \vnabla \vphi + s\mJ}_{\Omega_h}^2 + \norm{\vnabla \cdot \vq}_{\Omega_h}^2 + \norm{\vq + \vnabla \cdot \mtau}_{\Omega_h}^2 \\ &
+ \norm{\tau_{12}-\tau_{21}}_{\Omega_h}^2
+ \norm{[\vq \cdot \vn]}_{\partial \Omega_h}^2 + \norm{[\mtau \vn]}_{\partial \Omega_h}^2 + \norm{[z\vn]}_{\partial \Omega_h}^2 + \norm{[\vphi \vn]}_{\partial \Omega_h}^2,
\end{aligned}
\end{equation}
where $\norm{\cdot}_{\Omega_h}^2 = \scp{\cdot}{\cdot}_{\Omega_h}$ and 
\[
\begin{alignedat}{2}
\norm{[\vq \cdot \vn]}_{\partial \Omega_h} &= \sup_{\hat{w} \in H^{1/2}_0(\partial \Omega_h)}\frac{\dual{\hat{w}}{\vq \cdot \vn}_{\partial\Omega_h}}{\norm{\hat{w}}_{H^{1/2}(\partial \Omega_h)}}, \quad & 
\norm{[\mtau \vn]}_{\partial \Omega_h} &= \sup_{\hat{\vpsi} \in \bH^{1/2}_0(\partial \Omega_h)}\frac{\dual{\hat{\vpsi}}{\mtau\vn}_{\partial\Omega_h}}{\norm{\hat{\vpsi}}_{\bH^{1/2}(\partial \Omega_h)}}, \\
\norm{[z\vn]}_{\partial \Omega_h} &= \sup_{\hat{V}_n \in H^{-1/2}(\partial \Omega_h)}\frac{\dual{z}{\hat{V}_n}_{\partial\Omega_h}}{\norm{\hat{V}_n}_{H^{-1/2}(\partial \Omega_h)}}, \quad & 
\norm{[\vphi \vn]}_{\partial \Omega_h} &= \sup_{\hat{\mM}_n \in \bH^{-1/2}(\partial \Omega_h)}\frac{\dual{\vphi}{\hat{\mM}_n}_{\partial\Omega_h}}{\norm{\hat{\mM}_n}_{\bH^{-1/2}(\partial \Omega_h)}}.
\end{alignedat}
\]
It is easy to see that conditions \eqref{eqn:BB for the adjoint} and \eqref{eqn:continuity of B} of the Babu\v{s}ka-Lax-Milgram Theorem are equivalent to the following Lemma.
\begin{lemma} \label{lemma:norm equivalence}
There exist positive constants $\alpha$ and $C$, which are independent of the mesh $\Omega_h$, such that
\begin{equation} \label{eqn:norm equivalence}
\alpha \norm{\vv}_{\cbV} \leq \enorm{\vv}_{\cbV} \leq C \norm{\vv}_{\cbV} \quad \forall \vv \in \cbV.
\end{equation}
\end{lemma}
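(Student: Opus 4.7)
I would split \eqref{eqn:norm equivalence} into the upper and lower bounds. The upper bound $\enorm{\vv}_\cbV \le C\norm{\vv}_\cbV$ is essentially bookkeeping: in the explicit formula for $\enorm{\vv}_\cbV^2$, each volume term is controlled by the triangle and Cauchy--Schwarz inequalities (for instance $\norm{\kappa^{-1}t^2\vq + \vnabla z - \vphi}_{\Omega_h} \le \kappa^{-1}t^2\norm{\vq}_{\Omega_h} + \norm{\vnabla z}_{\Omega_h} + \norm{\vphi}_{\Omega_h}$) with constants depending only on $\kappa^{-1}t^2$ and on $\cC^{-1}$. Each jump term is bounded by the broken $\bH(\mathrm{div})$- or $H^1$-norm of its underlying field, directly from the definition of the trace norms on $\partial\Omega_h$ as infima of conforming lifts, so the resulting $C$ is independent of the mesh.

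The lower bound $\alpha\norm{\vv}_\cbV \le \enorm{\vv}_\cbV$ is the substantive assertion and is equivalent to saying that the adjoint $\mT^{*}:\cbV \to \cbU$ of $\cB$ is bounded below. I would prove it by a duality argument built on the well-posedness of the bending-moment formulation of Reissner--Mindlin established in~\cite{Beiraodaveiga2012} (see also~\cite{Amara2002,Behrens2010,Behrens2011}). The plan is to construct, for each $\vv \in \cbV$, a conforming trial function $\vu^{*} \in \cbU$ satisfying $\cB(\vu^{*},\vv) \gtrsim \norm{\vv}_\cbV^2$ and $\norm{\vu^{*}}_\cbU \lesssim \norm{\vv}_\cbV$. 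Such a $\vu^{*}$ is obtained by solving an auxiliary continuous Reissner--Mindlin problem whose volume forcing is assembled from the elementwise residuals $\kappa^{-1}t^2\vq+\vnabla z-\vphi$, $\cC^{-1}\mtau+\vnabla\vphi+s\mJ$, $\vnabla\cdot\vq$, $\vq+\vnabla\cdot\mtau$, $\tau_{12}-\tau_{21}$ appearing in $\enorm{\vv}_\cbV^2$, and whose Dirichlet-type boundary data is a conforming lift of the jump functionals $[\vq\cdot\vn],[\mtau\vn],[z\vn],[\vphi\vn]$. The continuous stability estimate of \cite{Beiraodaveiga2012} supplies $\norm{\vu^{*}}_\cbU \le C(t)\norm{\vv}_\cbV$, while elementwise integration by parts in $\cB(\vu^{*},\vv)$ collapses the interelement surface integrals against the (now conforming) traces of $\vu^{*}$ and recovers $\norm{\vv}_\cbV^2$. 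Combining these two facts yields $\enorm{\vv}_\cbV \ge \cB(\vu^{*},\vv)/\norm{\vu^{*}}_\cbU \ge (1/C(t))\norm{\vv}_\cbV$, i.e.\ $\alpha=1/C(t)$, which is independent of the mesh.

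The main obstacle is the thickness degeneracy. Because the coefficient $\kappa^{-1}t^2$ in front of $\vq$ collapses as the plate thins, the first residual of $\enorm{\vv}_\cbV$ does not control $\norm{\vq}_{\bL_2}$ in a $t$-uniform way, so the thickness dependence must be routed through the coupled bending-moment stability of~\cite{Beiraodaveiga2012}, and $\alpha$ inherits that dependence (matching the ``thickness-dependent constant'' announced in the abstract). A secondary technical point is producing global $H^1_0(\Omega)$ and $\bH(\mathrm{div},\Omega)$ lifts of the jump functionals with quantitative norm control; this is exactly what the definitions of $H^{1/2}_0(\partial\Omega_h)$ and $H^{-1/2}(\partial\Omega_h)$ as trace spaces of the global conforming spaces provide, with the infima in those definitions feeding directly into $\norm{\vu^{*}}_\cbU$. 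Once both inequalities are in hand, the assumptions of Theorem~\ref{theorem:babuska-lax-milgram} are verified and, together with Lemma~\ref{lemma:uniqueness}, deliver the full well-posedness of the ultra-weak formulation.
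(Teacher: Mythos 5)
Your overall strategy for the lower bound --- a duality argument through the continuous bending-moment formulation of \cite{Beiraodaveiga2012}, with conformity of the auxiliary solution collapsing the interelement pairings and the factor $t^{-1}$ entering through the shear term of the stability estimate --- is exactly the mechanism the paper uses, and your treatment of the upper bound also matches. However, your construction of the dual function $\vu^{*}$ is set up incorrectly, and as stated the key inequality $\cB(\vu^{*},\vv)\gtrsim\norm{\vv}_{\cbV}^2$ would not come out. The paper does not feed the elementwise residuals or lifts of the jump functionals into the auxiliary problem; it takes the $L_2$ components $(\vq,\mtau,z,\vphi,s)$ of $\vv$ \emph{themselves} as the right-hand side of the adjoint bending-moment system \eqref{eqn:weak system for existence}. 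With that choice, elementwise integration by parts gives the exact identity $\cB(\vu^{*},\vv)=\norm{\vq}_{\bL_2(\Omega)}^2+\norm{\mtau}_{\bL_2(\Omega)}^2+\norm{z}_{L_2(\Omega)}^2+\norm{\vphi}_{\bL_2(\Omega)}^2+\norm{s}_{L_2(\Omega)}^2$, and Cauchy--Schwarz against the residual and jump terms of $\enorm{\vv}$, combined with the stability estimate \eqref{eqn:regularity estimate}, yields the $L_2$ bounds \eqref{eqn:L2 bounds} with the factor $t^{-1}$. If instead the forcing were the residuals $\kappa^{-1}t^2\vq+\vnabla z-\vphi$, etc., then $\cB(\vu^{*},\vv)$ would be a sum of pairings such as $\scp{\mV}{\kappa^{-1}t^2\vq+\vnabla z-\vphi}_{\Omega}$ between the auxiliary solution and those residuals, and there is no coercivity mechanism forcing this to dominate $\norm{\vv}_{\cbV}^2$. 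Likewise, the jump functionals live in the duals of the trace spaces on the skeleton and are neither admissible Dirichlet data for a global elliptic solve nor in need of lifting: pairing the discontinuous test functions with the conforming traces of $\vu^{*}$ is already controlled by the jump seminorms appearing in $\enorm{\vv}$.

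A second, related gap: a single auxiliary solve can only deliver the $L_2$ portions of $\norm{\vv}_{\cbV}$. The broken derivative terms $\norm{\vnabla\cdot\vq}_{\Omega_h}$, $\norm{\vnabla\cdot\mtau}_{\Omega_h}$, $\norm{\vnabla z}_{\Omega_h}$, $\norm{\vnabla\vphi}_{\Omega_h}$ must then be recovered separately, by the triangle inequality applied to the residual terms already present in $\enorm{\vv}$ together with the $L_2$ bounds just obtained (this is \eqref{eqn:derivative bounds} in the paper). Your target $\cB(\vu^{*},\vv)\gtrsim\norm{\vv}_{\cbV}^2$ is therefore both stronger than what is needed and not attainable from one dual problem. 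With these two corrections --- use $(\vq,\mtau,z,\vphi,s)$ as the adjoint data, and add the triangle-inequality step for the derivative terms --- your argument becomes the paper's proof.
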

\begin{proof}
Let $\vv = (\vq,\mtau,z,\vphi,s) \in \cbV$ be given and denote by 
\[
(\mV,\mM,w,\vpsi,r) \in \boldsymbol{H}(\mathrm{div},\Omega) \times \boldsymbol{H}(\mathbf{div},\Omega) \times L_2(\Omega) \times \boldsymbol{L}_2(\Omega) \times L_2(\Omega)
\]
the solution to the variational problem
\begin{equation} \label{eqn:weak system for existence}
\begin{alignedat}{2}
 \kappa^{-1}t^2\scp{\mV}{\delta \mV}_{\Omega}   + \scp{w}{\vnabla \cdot \delta \mV}_\Omega +  \scp{\vpsi}{\delta \mV}_\Omega &= \scp{\vq}{\delta \mV}_{\Omega} &\quad &\forall \delta\mV \in \boldsymbol{H}(\mathrm{div},\Omega), \\
 \scp{\cC^{-1} \mM}{\delta \mM}_{\Omega} + \scp{\vpsi}{\vnabla \cdot \delta \mM}_{\Omega} + \scp{r \mJ}{\delta \mM}_{\Omega} &= \scp{\mtau}{\delta\mM}_{\Omega} &\quad &\forall \delta\mM \in \boldsymbol{H}(\mathbf{div},\Omega), \\
 \scp{-\vnabla \cdot \mV}{\delta w}_{\Omega} &= \scp{z}{\delta w}_{\Omega} &\quad &\forall \delta w \in L_2(\Omega), \\
 \scp{-\vnabla \cdot \mM-\mV}{\delta \vpsi}_{\Omega} &= \scp{\vphi}{\delta \vpsi}_{\Omega} &\quad &\forall \delta \vpsi \in \boldsymbol{L}_2(\Omega), \\
 \scp{\mM}{\delta r\mJ}_{\Omega} &= \scp{s}{\delta r}_{\Omega} &\quad &\forall \delta r \in L_2(\Omega),
\end{alignedat}
\end{equation}
which exists and is unique due to the wellposedness of the bending moment formulation of the Reissner-Mindlin model. Namely, the analysis of \cite{Beiraodaveiga2012} shows that the bilinear form induced by the left hand side of \eqref{eqn:weak system for existence} satisfies the inf-sup condition in a norm encompassing
\begin{equation} \label{eqn:norm components}
t \norm{\mV}_{\bL_2(\Omega)}, \; \norm{\vnabla \cdot \mV}_{\bL_2(\Omega)}, \; \norm{\mM}_{\bL_2(\Omega)}, \; \norm{\vnabla \cdot \mM + \mV}_{\bL_2(\Omega)}, \; \norm{w}_{L_2(\Omega)},\; \norm{\vpsi}_{\bL_2(\Omega)},\; \norm{r}_{L_2(\Omega)}.
\end{equation}
Testing with infinitely smooth functions in the first two equations of \eqref{eqn:weak system for existence} reveals then that $w \in H^1(\Omega)$, $\vpsi \in \bH^1(\Omega)$ so that the solution of \eqref{eqn:weak system for existence} satisfies the estimate
\begin{equation} \label{eqn:regularity estimate}
\begin{aligned}
t\norm{\mV}_{\bL_2(\Omega)} + \norm{\vnabla \cdot \mV}_{L_2(\Omega)} &+ 
\norm{\mM}_{\bL_2(\Omega)}
+ \norm{\vnabla \cdot \mM + \mV}_{\bL_2(\Omega)} + \norm{w}_{H^1(\Omega)} + \norm{\vpsi}_{\boldsymbol{H}^1(\Omega)} + \norm{r}_{\bL_2(\Omega)} \\
&\leq C\left(\norm{\vq}_{\bL_2(\Omega)} + \norm{\mtau}_{\bL_2(\Omega)} + \norm{z}_{L_2(\Omega)} +\norm{\vphi}_{\bL_2(\Omega)} + \norm{s}_{L_2(\Omega)}\right)
\end{aligned}
\end{equation}
where the constant $C>0$ is independent of $t$, $\vq$, $\mtau$, $z$, $\vphi$, and $s$. 

The passage from \eqref{eqn:weak system for injectivity} to \eqref{eqn:distributional system for injectivity} can be repeated to arrive from \eqref{eqn:weak system for existence} to the system
\begin{equation} \label{eqn:distributional system for existence}
\begin{aligned}
\kappa^{-1}t^2 \mV - \vnabla w + \vpsi &= \vq \\
\cC^{-1} \mM - \vnabla \vpsi + r \mJ &= \mtau \\
- \vnabla \cdot \mV &= z \\
- \vnabla \cdot \mM - \mV &= \vphi
\end{aligned}
\end{equation}
valid on each $K$ in the distributional sense. Now integration by parts yields
\[
\begin{aligned}
\norm{\vq}_{\bL_2(\Omega)}^2 + \norm{\mtau}_{\bL_2(\Omega)}^2 + \norm{z}_{L_2(\Omega)}^2 &+\norm{\vphi}_{\bL_2(\Omega)}^2 + \norm{s}_{L_2(\Omega)}^2 \\
&= \scp{\kappa^{-1}t^2 \mV - \vnabla w + \vpsi}{\vq}_{\Omega} + \scp{\cC^{-1}\mM - \vnabla \vpsi + r\mJ}{\mtau}_{\Omega} \\
&\quad - \scp{\vnabla \cdot \mV}{z}_{\Omega} - \scp{\vnabla \cdot \mM + \mV}{\vphi}_{\Omega} + \scp{\mM}{s \mJ}_{\Omega} \\
&= \kappa^{-1}t^2 \scp{\mV}{\vq}_{\Omega_h} + \scp{w}{\vnabla \cdot \vq}_{\Omega_h} - \dual{w}{\vq \cdot \vn}_{\partial\Omega_h} + \scp{\vpsi}{\vq}_{\Omega_h} \\
&\quad+\scp{\cC^{-1}\mM}{\mtau}_{\Omega_h} + \scp{\vpsi}{\vnabla \cdot \mtau}_{\Omega_h} - \dual{\vpsi}{\mtau\vn}_{\partial\Omega_h} + \scp{r\mJ}{\mtau}_{\Omega_h} \\
&\quad +\scp{\mV}{\vnabla z}_{\Omega_h} - \dual{z}{\mV \cdot \vn}_{\partial\Omega_h} \\
&\quad + \scp{\mM}{\vnabla \vphi}_{\Omega_h} - \dual{\vphi}{\mM\vn}_{\partial\Omega_h} - \scp{\mV}{\vphi}_{\Omega_h} + \scp{\mM}{s\mJ}_{\Omega_h}
\end{aligned}
\]
Collecting terms and applying Cauchy-Schwarz inequality, we get
\[
\begin{aligned}
\norm{\vq}_{\bL_2(\Omega)}^2 + \norm{\mtau}_{\bL_2(\Omega)}^2 &+ \norm{z}_{L_2(\Omega)}^2 +\norm{\vphi}_{\bL_2(\Omega)}^2 + \norm{s}_{L_2(\Omega)}^2 \\
&= \scp{\mV}{\kappa^{-1}t^2\vq + \vnabla z - \vphi}_{\Omega_h} + \scp{\mM}{\cC^{-1}\mtau + \vnabla \vphi + s\mJ}_{\Omega_h} \\
& \quad + \scp{w}{\vnabla \cdot \vq}_{\Omega_h} + \scp{\vpsi}{\vq + \vnabla \cdot \mtau}_{\Omega_h} + \scp{r\mJ}{\mtau}_{\Omega_h} \\
& \quad - \dual{w}{\vq \cdot \vn}_{\partial\Omega_h} - \dual{\vpsi}{\mtau\vn}_{\partial\Omega_h} - \dual{z}{\mV \cdot \vn}_{\partial\Omega_h} - \dual{\vphi}{\mM \vn}_{\partial\Omega_h} \\
&\leq \norm{\mV}_{\bL_2(\Omega)}\norm{\kappa^{-1}t^2\vq + \vnabla z - \vphi}_{\Omega_h} + \norm{\mM}_{\bL_2(\Omega)}\norm{\cC^{-1}\mtau + \vnabla \vphi + s\mJ}_{\Omega_h} \\
& \quad + \norm{w}_{L_2(\Omega)}\norm{\vnabla \cdot \vq}_{\Omega_h} + \norm{\vpsi}_{\bL_2(\Omega)}\norm{\vq + \vnabla \cdot \mtau}_{\Omega_h} + \norm{r}_{L_2(\Omega)}\norm{\tau_{12}-\tau_{21}}_{L_2(\Omega)} \\
& \quad + \norm{[\vq \cdot \vn]}_{\partial\Omega_h}\norm{w}_{H^1(\Omega)} + \norm{[\mtau\vn]}_{\partial\Omega_h}\norm{\vpsi}_{\boldsymbol{H}^1(\Omega)} \\
& \quad + \norm{[z]}_{\partial\Omega_h}\norm{\mV}_{\boldsymbol{H}(\mathrm{div},\Omega)} + \norm{[\vphi]}_{\partial\Omega_h}\norm{\mM}_{\boldsymbol{H}(\mathbf{div},\Omega)} \\
&\leq 2\enorm{\vv}_{\cbV}(\norm{\mV}_{\boldsymbol{H}(\mathrm{div},\Omega)} + \norm{\mM}_{\boldsymbol{H}(\mathbf{div},\Omega)}
+ \norm{w}_{{H}^1(\Omega)} + \norm{\vpsi}_{\boldsymbol{H}^1(\Omega)} + \norm{r}_{L_2(\Omega)})
\end{aligned}
\]
By using the estimate \eqref{eqn:regularity estimate}, we obtain
\[
\begin{aligned}
\norm{\vq}_{\bL_2(\Omega)}^2 + \norm{\mtau}_{\bL_2(\Omega)}^2 + &\norm{z}_{L_2(\Omega)}^2 + \norm{\vphi}_{\bL_2(\Omega)}^2 +  \norm{s}_{L_2(\Omega)}^2 \\
&\leq Ct^{-1} \enorm{\vv}_{\cbV}\left(\norm{\vq}_{\bL_2(\Omega)} + \norm{\mtau}_{\bL_2(\Omega)} + \norm{z}_{L_2(\Omega)} +\norm{\vphi}_{\bL_2(\Omega)} + \norm{s}_{L_2(\Omega)}\right)
\end{aligned}
\]
and, consequently,
\begin{equation} \label{eqn:L2 bounds}
\norm{\vq}_{\bL_2(\Omega)} + \norm{\mtau}_{\bL_2(\Omega)} + \norm{z}_{L_2(\Omega)} +\norm{\vphi}_{\bL_2(\Omega)} + \norm{s}_{L_2(\Omega)} \leq Ct^{-1} \enorm{\vv}_{\cV}.
\end{equation}

The remaining terms constituting the norm $\norm{\vv}_{\cbV}$ can be bounded from above by $\enorm{\vv}_{\cbV}$ directly or by using the triangle inequality:
\begin{equation} \label{eqn:derivative bounds}
\begin{aligned}
\norm{\vnabla \cdot \vq}_{\Omega_h} &\leq \enorm{\vv}_{\cbV} \\
\norm{\vnabla \cdot \mtau}_{\Omega_h} &\leq \norm{\vnabla \cdot \mtau + \vq}_{\Omega_h} + \norm{\vq}_{\bL_2(\Omega)} \leq Ct^{-1}\enorm{\vv}_{\cbV} \\
\norm{\vnabla z}_{\Omega_h} &\leq \norm{\kappa^{-1}t^2 \vq + \vnabla z - \vphi}_{\Omega_h} + \kappa^{-1}t^2 \norm{\vq}_{\bL_2(\Omega)} + \norm{\vphi}_{\bL_2(\Omega)} \leq Ct^{-1} \enorm{\vv}_{\cV} \\
\norm{\vnabla \vphi}_{\Omega_h} &\leq \norm{\cC^{-1}\mtau+\vnabla \vphi + s \mJ}_{\Omega_h} + \norm{\cC^{-1} \mtau}_{\bL_2(\Omega)} + 2\norm{s}_{L_2(\Omega)}^2 \leq Ct^{-1} \enorm{\vv}_{\cV}
\end{aligned}
\end{equation}
The first inequality in \eqref{eqn:norm equivalence} follows now from \eqref{eqn:derivative bounds} and \eqref{eqn:L2 bounds} with an $\alpha$ proportional to $t$.

The proof of the second inequality is more straightforward. The integral terms $\norm{\cdot}_{\Omega_h}$ can be bounded from above by $\norm{\vv}_{\cbV}$ using the triangle inequality whereas the jump terms can be handled by integration by parts and Cauchy-Schwarz inequality:
\[
\norm{[\vq \cdot \vn]}_{\partial\Omega_h} = \sup_{z \in H_0^1(\Omega)} \frac{\dual{z}{\vq \cdot \vn}_{\partial\Omega_h}}{\norm{z}_{H^1(\Omega)}} = \sup_{z \in H_0^1(\Omega)} \frac{\scp{\vnabla z}{\vq}_{\Omega_h} + \scp{z}{\vnabla \cdot \vq}_{\Omega_h}}{\norm{z}_{H^1(\Omega)}}
\leq \norm{\vq}_{\boldsymbol{H}(\mathrm{div},\Omega_h)}
\]
Similar arguments can be used to show that
\[
\begin{aligned}
\norm{[\mtau\vn]}_{\partial\Omega_h} &\leq \norm{\mtau}_{\boldsymbol{H}(\mathbf{div},\Omega_h)} \\
\norm{[z]}_{\partial\Omega_h} &\leq \norm{z}_{H_0^1(\Omega_h)} \\
\norm{[\vphi]}_{\partial\Omega_h} &\leq \norm{\vphi}_{\boldsymbol{H}_0^1(\Omega_h)}
\end{aligned}
\]
We leave the details to the reader and conclude our proof.
\end{proof}

We have shown in Lemmas \ref{lemma:uniqueness} and \ref{lemma:norm equivalence} that the conditions of the Babu\v{s}ka-Lax-Milgram theorem \ref{theorem:babuska-lax-milgram} hold. In other words, we have established
\begin{theorem}
The ultra-weak variational formulation of the Reissner-Mindlin plate bending problem defined by \eqref{eqn:variational problem}--\eqref{eqn:definition of forms} is well-posed.
\end{theorem}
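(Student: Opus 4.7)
My approach would be essentially an assembly of the results already at hand: apply Theorem~\ref{theorem:babuska-lax-milgram} once its three hypotheses are verified for the present $(\cbU,\cbV,\cB)$ and the load $\cL$ is shown to lie in $\cbV'$. Well-posedness then follows at once, together with the desired a priori bound $\norm{\vu}_{\cbU} \leq \norm{\cL}/\alpha$.

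The key observation is that hypotheses \eqref{eqn:continuity of B} and \eqref{eqn:BB for the adjoint} both involve the optimal test norm $\enorm{\cdot}_{\cbV}$ introduced at the start of Section~3.2. Indeed, the supremum defining $\enorm{\vv}_{\cbV}$ coincides with the left-hand side of \eqref{eqn:BB for the adjoint}, and continuity \eqref{eqn:continuity of B} can be rewritten as $\enorm{\vv}_{\cbV} \leq C \norm{\vv}_{\cbV}$. Both directions of this equivalence are supplied by Lemma~\ref{lemma:norm equivalence}, so I would simply invoke it. The injectivity condition \eqref{eqn:injectivity} is verbatim Lemma~\ref{lemma:uniqueness}. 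Finally, continuity of $\cL(\vv) = \scp{p}{z}_{\Omega_h}$ on $\cbV$ for $p \in L_2(\Omega)$ follows from Cauchy-Schwarz and the trivial bound $\norm{z}_{L_2(\Omega)} \leq \norm{\vv}_{\cbV}$, which is immediate from the definition of the product norm on $\cbV$.

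There is no real obstacle left at this stage of the argument: the difficult work has been discharged by Lemmas~\ref{lemma:uniqueness} and~\ref{lemma:norm equivalence}, the latter being the true crux as it relies on the well-posedness of the bending-moment formulation of~\cite{Beiraodaveiga2012} together with a careful element-by-element integration-by-parts bookkeeping of the jump terms. Since Lemma~\ref{lemma:norm equivalence} supplies an $\alpha$ proportional to $t$, the stability constant that emerges is thickness-dependent, exactly matching the claim announced in the abstract and the introduction.
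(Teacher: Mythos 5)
Your proposal matches the paper's own argument: the theorem is obtained exactly by verifying the three hypotheses of Theorem~\ref{theorem:babuska-lax-milgram} via Lemma~\ref{lemma:norm equivalence} (which supplies both \eqref{eqn:continuity of B} and \eqref{eqn:BB for the adjoint} through the equivalence of $\enorm{\cdot}_{\cbV}$ and $\norm{\cdot}_{\cbV}$) and Lemma~\ref{lemma:uniqueness} (which is \eqref{eqn:injectivity}). Your additional remark that $\cL \in \cbV'$ follows from Cauchy--Schwarz is a correct, if minor, detail the paper leaves implicit.
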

\begin{remark}
The proportionality of $\alpha$ to $t$, in \eqref{eqn:norm equivalence}, is due to the first term in \eqref{eqn:regularity estimate} which affects only the shear stress. This observation is ratified in our numerical experiments below.
\end{remark}

\section{The Approximate Problem} \label{sec:approximate problem}
In order to discretize \eqref{eqn:variational problem}, we choose a finite element trial function space $\cbU_h \subset \cbU$ and construct a corresponding test function space $\cbV^r_h = \mT^r(\cbU_h) \subset \cbV^r \subset \cbV$ by solving the auxiliary problem
\[
\scp{\mT^{r}\vw_h}{\vv}_{\cbV} = \mathbf{B}(\vw_h,\vv) \quad \forall \vv \in \cbV^r
\]
for each $\vw_h \in \cbU_h$. The discontinuous Petrov-Galerkin approximation $\vu_h \in \cbU_h$ is defined as the solution to the problem
\begin{equation} \label{eqn:dpg approximation}
\cB(\vu_h,\vv) = \cL(\vv) \quad \forall \vv \in \cbV_h^r
\end{equation}
The space $\cbV^r$ is determined by an appropriate enrichment of the trial function space $\cbU_h$. The level of enrichment is specified so that the Fortin's Criterion for the discrete inf-sup condition holds:
\begin{lemma} \label{lemma:fortin}
(Fortin's Criterion for DPG)
Suppose that for the subspaces $\cbV^r$, $\cbU_h$, there exists a bounded linear projector $\mPi_h : \cbV \rightarrow \cbV^r$ such that
\begin{equation} \label{eqn:projection condition}
\cB(\vw_h,\vv-\mPi_h \vv) = 0 \quad \forall \vw_h \in \cbU_h.
\end{equation}
If $\norm{\mPi_h} \leq c$, then the finite element spaces $\cbU_h$ and $\cbV_h^r$ satisfy the inf-sup condition
\begin{equation}
\sup_{\vv_h^r \in \cbV_h^r} \frac{\mB(\vu_h,\vv_h^r)}{\norm{\vv_h^r}_{\cbV}} \geq \frac{\alpha}{c} \norm{\vu_h}_{\cbU} \quad \forall \vu_h \in \cbU_h
\end{equation}
and the DPG approximation is uniquely defined by \eqref{eqn:dpg approximation} and is a quasi-optimal approximation of $\vu$, namely
\begin{equation} \label{eqn:best approximation property}
\norm{\vu - \vu_h}_{\cbU} \leq \frac{Cc}{\alpha} \min_{\vw_h \in \cbU_h} \norm{\vu - \vw_h}_{\cbU}
\end{equation}
\end{lemma}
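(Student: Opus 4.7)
The plan is two-stage. First I would lift the continuous inf-sup condition established in Lemmas \ref{lemma:uniqueness} and \ref{lemma:norm equivalence} (together with Theorem \ref{theorem:babuska-lax-milgram}) to the discrete pair $(\cbU_h, \cbV_h^r)$ by means of the hypothesized Fortin operator $\mPi_h$, and then deduce the quasi-optimality \eqref{eqn:best approximation property} by the usual C\'ea-type argument powered by Galerkin orthogonality.

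For the discrete inf-sup, fix $\vu_h \in \cbU_h$. The continuous theory gives $\alpha \norm{\vu_h}_{\cbU} \leq \sup_{\vv \in \cbV} \cB(\vu_h,\vv)/\norm{\vv}_{\cbV}$. For any $\vv \in \cbV$, the orthogonality condition \eqref{eqn:projection condition} yields $\cB(\vu_h,\vv) = \cB(\vu_h,\mPi_h \vv)$, and the bound $\norm{\mPi_h \vv}_{\cbV} \leq c\norm{\vv}_{\cbV}$ then gives
\[
\frac{\cB(\vu_h,\vv)}{\norm{\vv}_{\cbV}} \;\leq\; c\,\frac{\cB(\vu_h,\mPi_h \vv)}{\norm{\mPi_h \vv}_{\cbV}} \;\leq\; c\sup_{\vv^r \in \cbV^r} \frac{\cB(\vu_h,\vv^r)}{\norm{\vv^r}_{\cbV}}.
\]
Taking the supremum over $\vv \in \cbV$ on the left produces $(\alpha/c)\norm{\vu_h}_{\cbU} \leq \sup_{\vv^r \in \cbV^r} \cB(\vu_h,\vv^r)/\norm{\vv^r}_{\cbV}$. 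The final observation is that $\mT^r \vu_h$ is, by the very definition of $\mT^r$, the Riesz representer in $\cbV^r$ of the functional $\vv \mapsto \cB(\vu_h,\vv)$, so the supremum on the right is attained at $\mT^r \vu_h \in \cbV_h^r = \mT^r(\cbU_h)$; hence the sup over $\cbV^r$ equals the sup over $\cbV_h^r$, which is the advertised inequality.

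For the remaining assertions, the discrete inf-sup combined with $\dim \cbU_h = \dim \cbV_h^r$ delivers unique solvability of \eqref{eqn:dpg approximation}, and the inclusion $\cbV_h^r \subset \cbV$ gives the Galerkin orthogonality $\cB(\vu - \vu_h,\vv_h^r) = 0$. For any $\vw_h \in \cbU_h$, applying the discrete inf-sup to $\vu_h - \vw_h \in \cbU_h$, inserting this orthogonality, and invoking the continuity constant $C$ from \eqref{eqn:continuity of B} gives
\[
\tfrac{\alpha}{c}\norm{\vu_h - \vw_h}_{\cbU} \leq \sup_{\vv_h^r \in \cbV_h^r}\frac{\cB(\vu_h - \vw_h,\vv_h^r)}{\norm{\vv_h^r}_{\cbV}} = \sup_{\vv_h^r \in \cbV_h^r}\frac{\cB(\vu - \vw_h,\vv_h^r)}{\norm{\vv_h^r}_{\cbV}} \leq C \norm{\vu - \vw_h}_{\cbU}.
\]
A triangle inequality applied to $\vu - \vu_h = (\vu - \vw_h) + (\vw_h - \vu_h)$, followed by minimization over $\vw_h \in \cbU_h$, produces \eqref{eqn:best approximation property} with constant $1 + Cc/\alpha$, which may be absorbed into $Cc/\alpha$ up to renaming $C$.

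The one step requiring real thought rather than bookkeeping is the identification of $\sup_{\vv^r \in \cbV^r}$ with $\sup_{\vv_h^r \in \cbV_h^r}$; but this is a structural consequence of defining $\cbV_h^r$ as the image $\mT^r(\cbU_h)$ and is not a substantive obstacle. The genuine work of the discretization therefore does not reside in this lemma at all: it lies in exhibiting, for the actual finite element choices of Section \ref{sec:approximate problem}, a projector $\mPi_h$ onto the enriched test space that satisfies \eqref{eqn:projection condition} with a constant $c$ independent of $h$ (and, ideally, of $t$).
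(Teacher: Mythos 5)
Your argument is correct and is essentially the standard one: the paper itself gives no proof here but defers to Theorem 2.1 of the cited Gopalakrishnan--Qiu reference, whose proof is exactly this Fortin lifting of the continuous inf-sup condition followed by a C\'ea-type quasi-optimality argument (including the identification of the supremum over $\cbV^r$ with that over $\cbV_h^r$ via the Riesz representer $\mT^r\vu_h$). The only cosmetic points are that your first chain of inequalities should be read only for test functions with $\cB(\vu_h,\vv)>0$ (which suffices for the supremum), and that your final constant $1+Cc/\alpha$ is absorbed into $Cc/\alpha$ using $c\geq 1$ and $\alpha\leq C$; neither affects correctness.
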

\begin{proof}
See proof of Theorem 2.1 in \cite{Gopalakrishnan2011}.
\end{proof}

To make Lemma \ref{lemma:fortin} applicable in the present context, we need to construct local projectors from $\boldsymbol{H}(\mathrm{div},K)$ and $H^1(K)$ to suitable finite element spaces. In \cite{Gopalakrishnan2011}, these projectors were constructed for polynomial spaces on simplicial triangulations of $\Omega$. We will use the techniques of \cite{Arnold2005} to construct analogous projectors for quadrilateral meshes. We assume the partitions to be shape-regular in the usual sense, that is, each angle of each $K \in \Omega_h$ is assumed to be bounded away from $0$ and $\pi$ by an absolute, positive constant and the ratio of any two sides on $K$ is assumed to be uniformly bounded. 

Let $\hat{K}$ be a rectangular reference element, and denote by $\mF_K:\hat{K} \rightarrow \mathbb{R}^2$ the bilinear diffeomorphism onto the actual element $K = \mF_K(\hat{K})$. We define the local bilinear quadrilateral finite element space of degree $r$ as
\begin{equation} \label{eqn:bilinear FE space of degree r}
S_r(K) = \{ v \in L_2(K), \; v = \hat{v} \circ \mF_K^{-1}, \; \hat{v} \in \cQ_r(\hat{K}) \},
\end{equation}
where $\cQ_r(\hat{K}) = \cP_{r,r}(\hat{K})$ denotes the space of polynomials of degree at most $r$ in each variable separately on $\hat{K}$. We also use the local vector finite element space
\begin{equation} \label{eqn:vector finite element space}
\bV\!\!_r(K) = \{ \vq:K\rightarrow \mathbb{R}^2 \; \mid \; \vq = (\mP_K \hat{\vq}) \circ \mF_K^{-1}, \; \hat{\vq} \in \cbR\cbT_{r}(\hat{K})\},
\end{equation}
where $\cbR\cbT_{r}(\hat{K})=\cP_{r+1,r}(\hat{K}) \times \cP_{r,r+1}(\hat{K})$ is the Raviart-Thomas space and $\mP_K$ denotes the Piola transformation which is defined in terms of the Jacobian matrix $\mJ_K = D\mF_K$ as
\[
\mP_K(\hat{\vx}) = \frac{\mJ_K(\hat{\vx})}{\det{\mJ_K(\hat{\vx})}}.
\]

For the numerical fluxes and traces we need local polynomial spaces defined on the boundary $\partial K$ as
\[
\begin{aligned}
\Gamma_r(\partial K) &= \{ \gamma \in L_2(\partial K), \; \gamma\!\!\mid_E \in \cP_r(E) \; \text{for all edges $E$ of $K$} \}, \\
\tilde{\Gamma}_r(\partial K) &= \Gamma_r(\partial K) \cap \cC(\partial K),
\end{aligned}
\]
where $\cP_r(\partial K)$ stands for polynomials of degree $r$ on $E$ and $\cC(\partial K)$ stands for the space of continuous functions on $\partial K$.

The trial space of degree $p$ for the DPG method is defined in terms of the above spaces\footnote{A tensor-valued function is included in $\bV_p(K)$ row-wise according to the definition \eqref{eqn:vector finite element space}.} as
\[
\begin{split}
\boldsymbol{\cbU}_h &= \{(\mV,\mM,w,\vpsi,r,\hat{w},\hat{\vpsi},\hat{V}_n,\hat{\mM}_n) \in \cbU \; : \\
&\mV\!\!\mid_K \in \bV_p(K), \; 
\mM\!\!\mid_K \in \bV_p(K), \; 
w\!\!\mid_K \in S_p(K),\;  
\vpsi\!\!\mid_K \in \bS_p(K), \; 
r\!\!\mid_K \in S_p(K), \\
 &\hat{w}\!\!\mid_{\partial K} \in \tilde{\Gamma}_{p+1}(\partial K), \;  
 \hat{\vpsi}\!\!\mid_{\partial K} \in \tilde{\Gamma}_{p+1}(\partial K), \; 
 \hat{V}_n\!\!\mid_{\partial K} \in \Gamma_{p}(\partial K), \;
 \hat{\mM}_n\!\!\mid_{\partial K} \in \Gamma_{p}(\partial K)  
 \quad \forall K \in \Omega_h
 \}
\end{split}
\]

In the definition of the enriched test function space $\cbV^r$, we may employ the space \eqref{eqn:bilinear FE space of degree r} to approximate those components which belong to $H^1(K)$ or $L_2(K)$ and the space~\eqref{eqn:vector finite element space} to approximate the components in $\bH(\mathrm{div},K)$. The definition of $\cbV^r$ is
\[
\begin{split}
\cbV^r = \{ &(\vq,\mtau,z,\vphi,s) \in \cbV \; : \;
\vq\!\!\mid_K \in \bV\!_r(K), \; 
\mtau\!\!\mid_K \in \bV\!_r(K), \\
&z\!\!\mid_K \in S_r(K), \;
\vphi\!\!\mid_K \in \bS_r(K), \;
\mu\!\!\mid_K \in S_r(K) \quad \forall K \in \Omega_h 
\}.
\end{split}
\]

Next we will show, that taking $r=p+3$ is sufficient to guarantee the existence of the projector needed to guarantee the best approximation property of $\vu_h$ in Lemma  \ref{lemma:fortin}. The proof consists of three parts and follows closely the reasoning used in \cite{Gopalakrishnan2011} with small modifications.

\begin{lemma}
Let $B(K)$ be defined as $B(K) = \{ z \in S_{p+2}(K) \; : \; \text{$z$ is zero at the vertices of $K$}\}$. Then there exists a projector $R_K^0$ onto $B(K) \subset H^1(K)$ such that
\begin{align}
\scp{R_K^0 z}{v}_K &= \scp{z}{v}_K \quad \forall v \in S_p(K) \label{eqn:R_K^0 first} \\ 
\dual{R_K^0 z}{\gamma}_{\partial K} &= \dual{z}{\gamma}_{\partial K} \quad \forall \gamma \in \Gamma_p( \partial K) \label{eqn:R_K^0 second} \\
h_K^{-1}\norm{R_K^0 z}_{L_2(K)} + \abs{R_K^0 z}_{H^1(K)} 
&\leq C(h_K^{-1}\norm{z}_{L_2(K)}+\abs{z}_{H^1(K)})
\label{eqn:R_K^0 third}
\end{align}
for all $z \in H^1(K)$.
\end{lemma}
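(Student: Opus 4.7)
The plan is to construct $R_K^0$ first on the reference element $\hat K$ via a square linear system and then pull back through the bilinear diffeomorphism $\mF_K$, in the same spirit as the construction for simplicial elements. Let
\[
\hat B = \{\hat z \in \cQ_{p+2}(\hat K) : \hat z \text{ vanishes at the four vertices of } \hat K\},
\]
and seek $\hat R^0 \hat z \in \hat B$ satisfying
\[
\scp{\hat R^0 \hat z}{\hat v}_{\hat K} = \scp{\hat z}{\hat v}_{\hat K} \ \forall \hat v \in \cQ_p(\hat K), \qquad
\dual{\hat R^0 \hat z}{\hat\gamma}_{\partial \hat K} = \dual{\hat z}{\hat\gamma}_{\partial \hat K} \ \forall \hat\gamma \in \Gamma_p(\partial\hat K).
\]
A dimension count confirms this system is square: $\dim \hat B = (p+3)^2 - 4 = (p+1)(p+5)$, while the number of test conditions is $\dim \cQ_p(\hat K) + \dim \Gamma_p(\partial\hat K) = (p+1)^2 + 4(p+1) = (p+1)(p+5)$, using that $\Gamma_p$ imposes no inter-edge continuity. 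So well-posedness reduces to injectivity.

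The injectivity step is the heart of the proof. Let $w \in \hat B$ annihilate every $\hat v \in \cQ_p(\hat K)$ and every $\hat\gamma \in \Gamma_p(\partial \hat K)$. On each edge $E$ of $\hat K$, $w|_E$ is a univariate polynomial of degree at most $p+2$ vanishing at both endpoints, so $w|_E = \lambda_L \lambda_R q$ with $q \in \cP_p(E)$ and $\lambda_L, \lambda_R$ the affine coordinates on $E$ vanishing at the two endpoints. Choosing $\hat\gamma$ equal to $q$ on $E$ and zero on the other edges (allowed because $\Gamma_p$ is discontinuous across vertices) yields $\int_E \lambda_L \lambda_R q^2 \, ds = 0$, and since $\lambda_L \lambda_R \geq 0$ on $E$ this forces $q = 0$. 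Hence $w|_{\partial\hat K} = 0$. Writing $\hat K = [0,1]^2$, we then have $w = x(1-x)y(1-y)\tilde w$ with $\tilde w \in \cQ_p(\hat K)$, and testing the interior identity against $\hat v = \tilde w$ gives $\int_{\hat K} x(1-x)y(1-y)\tilde w^2 \, d\hat x = 0$, hence $\tilde w = 0$ and $w = 0$.

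With the projector constructed on $\hat K$, continuity of the defining right-hand-side functionals on $H^1(\hat K)$ combined with the finite dimensionality of $\hat B$ immediately yields $\|\hat R^0 \hat z\|_{L_2(\hat K)} + |\hat R^0 \hat z|_{H^1(\hat K)} \leq C(\|\hat z\|_{L_2(\hat K)} + |\hat z|_{H^1(\hat K)})$. I would then define $R_K^0 z$ through the pullback $(R_K^0 z)\circ \mF_K = \hat R^0(z\circ\mF_K)$ and recover \eqref{eqn:R_K^0 third} from the standard two-dimensional scalings $\|v\|_{L_2(K)} \sim h_K \|\hat v\|_{L_2(\hat K)}$ and $|v|_{H^1(K)} \sim |\hat v|_{H^1(\hat K)}$ valid under shape regularity. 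For non-affine bilinear $\mF_K$, the conditions \eqref{eqn:R_K^0 first}--\eqref{eqn:R_K^0 second} translate into weighted orthogonality on $\hat K$ with weight $|\det D\mF_K|$, but the same injectivity argument still applies since this weight is strictly positive and pointwise comparable to $h_K^2$ under shape regularity. The only nontrivial obstacle is the injectivity step; once one exploits the edgewise (discontinuous) nature of $\Gamma_p$ to force $w$ to vanish on $\partial\hat K$, the remaining argument is a standard boundary-bubble plus interior-moment computation.
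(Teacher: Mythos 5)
Your proposal is correct and follows essentially the same route as the paper's proof: the identical dimension count $(p+3)^2-4=(p+1)^2+4(p+1)$, injectivity via the edge-bubble factorization $w|_E=\lambda_L\lambda_R q$ tested against the discontinuous edgewise $\Gamma_p$ data, then the interior biquadratic bubble $x(1-x)y(1-y)\tilde w$ tested against $\tilde w\in\cQ_p$, and finally a scaling argument for the stability bound. You are somewhat more explicit than the paper about formulating the system on the reference element and about the positive weight $|\det D\mF_K|$ arising for non-affine bilinear maps, but this is a refinement of presentation rather than a different argument.
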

\begin{proof}
To see that $R_K^0$ is well-defined, we first note that the number of conditions in \eqref{eqn:R_K^0 first} and \eqref{eqn:R_K^0 second} is
\[
\dim S_p(K) + \dim \Gamma_p(\partial K) = (p+1)^2 + 4(p+1) = p^2 +6p + 5
\]
and equals the dimension of $B(K)$:
\[
\dim B(K) = (p+3)^2 - 4 = p^2 +6p + 5
\]
Therefore, in order to show that $R_K^0 z$ exists and is unique, it suffices to show that $z = 0$ implies $R_K^0 z = 0$. On each edge $e$ of $\partial K$, $R_K^0 z$ has the form $R_K^0 z\!\!\mid_e = B_e u$ where $u \in \cP_p(e)$ and $B_e$ is a quadratic bubble function defined on $e$ such that $0 \leq B_e \leq 1$. Consequently, \eqref{eqn:R_K^0 second} implies that $R_K^0 z\!\!\mid_e = 0$ on each edge. This in turn means that $R_K^0 z = B_K \phi_p$, where $\phi_p \in \cQ_p(K)$ and $B_K$ is the biquadratic bubble function defined on $K$ such that $0 \leq B_K \leq 1$ and $B_K\!\!\mid_{\partial K} = 0$. Now \eqref{eqn:R_K^0 first} implies that $R_K^0 z = 0$. The mesh regularity hypothesis and a scaling argument guarantee the validity of \eqref{eqn:R_K^0 third} with a constant $C$ independent of $K$.
\end{proof}
We can now construct a projector into the enriched finite element space such that the $H^1$-norm is bounded by an $h$-independent number. This is the content of the following Lemma.

\begin{lemma} \label{lemma:R_K}
There exists a projector $R_K$ from $H^1(K)$ into $S_{p+2}(K)$ such that 
\begin{align}
\scp{R_K z}{v}_{K} &= \scp{z}{v}_{K} \quad \forall v \in S_p(K) \label{eqn:R_K first} \\ 
\dual{R_K z}{\gamma}_{\partial K} &= \dual{z}{\gamma}_{\partial K} \quad \forall \gamma \in \Gamma_p( \partial K) \label{eqn:R_K second} \\
\norm{R_K z}_{H^1(K)} 
&\leq C\norm{z}_{H^1(K)}
\label{eqn:R_K third}
\end{align}
for all $z \in H^1(K)$.
\end{lemma}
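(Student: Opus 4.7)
The plan is to correct a locally defined, $H^1$-stable quasi-interpolant by the bubble projector $R_K^0$ of the previous lemma. First I would introduce an auxiliary operator $I_K : H^1(K) \to S_{p+2}(K)$ of Scott-Zhang type, adapted to the tensor-product space $\cQ_{p+2}$ on the reference rectangle and pulled back to $K$ via $\mF_K$ in the spirit of Arnold, Boffi and Falk \cite{Arnold2005}. Since pointwise nodal evaluation is not continuous on $H^1(K)$ in two dimensions, the degrees of freedom of $I_K$ must be realized as edge and interior moments, which are bounded on $H^1(K)$. A reference-to-physical scaling argument combined with the mesh regularity hypothesis and the Bramble-Hilbert lemma then yields that $I_K$ reproduces $S_{p+2}(K)$ and satisfies
\[
\norm{I_K z}_{H^1(K)} \leq C \norm{z}_{H^1(K)}, \qquad h_K^{-1}\norm{z - I_K z}_{L_2(K)} + \abs{z - I_K z}_{H^1(K)} \leq C \norm{z}_{H^1(K)},
\]
with a constant $C$ independent of $h_K$.

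Next I would set $R_K z := I_K z + R_K^0(z - I_K z)$, which lies in $S_{p+2}(K)$ because $B(K) \subset S_{p+2}(K)$. For any $v \in S_p(K)$ the identity \eqref{eqn:R_K^0 first} gives
\[
\scp{R_K z}{v}_K = \scp{I_K z}{v}_K + \scp{z - I_K z}{v}_K = \scp{z}{v}_K,
\]
so \eqref{eqn:R_K first} holds, and the boundary moment condition \eqref{eqn:R_K second} follows analogously from \eqref{eqn:R_K^0 second}. Because $I_K$ reproduces $S_{p+2}(K)$, the correction $R_K^0(z - I_K z)$ vanishes whenever $z \in S_{p+2}(K)$, so $R_K$ is a genuine projector onto $S_{p+2}(K)$.

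For the stability bound \eqref{eqn:R_K third} I would combine the triangle inequality with \eqref{eqn:R_K^0 third} applied to $w = z - I_K z$. Writing $\norm{R_K^0 w}_{H^1(K)} \leq \norm{R_K^0 w}_{L_2(K)} + \abs{R_K^0 w}_{H^1(K)}$ and using \eqref{eqn:R_K^0 third} together with the boundedness of $h_K$ by $\mathrm{diam}(\Omega) = 1$, this is controlled by $C (h_K^{-1}\norm{w}_{L_2(K)} + \abs{w}_{H^1(K)})$, which in turn is bounded by $C \norm{z}_{H^1(K)}$ thanks to the approximation estimate for $I_K$. Adding $\norm{I_K z}_{H^1(K)} \leq C\norm{z}_{H^1(K)}$ yields the desired bound.

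The hard part will be the construction of the quasi-interpolant $I_K$ on a quadrilateral with the listed $h$-independent stability and approximation bounds; once this is in hand the rest of the argument is essentially bookkeeping. Unlike the simplicial construction of \cite{Gopalakrishnan2011}, which leverages barycentric coordinates and nodal bases on the reference triangle, the tensor-product quadrilateral setting requires a careful choice of edge and interior moment functionals together with a Piola-type pullback compatible with the bilinear diffeomorphism $\mF_K$ used in \eqref{eqn:bilinear FE space of degree r}.
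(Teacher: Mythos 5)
Your decomposition $R_K = I_K + R_K^0\circ(\mathrm{Id}-I_K)$ is exactly the paper's, and your verification of \eqref{eqn:R_K first}--\eqref{eqn:R_K third} from the properties of $R_K^0$ is correct. The one substantive difference is the choice of the auxiliary operator $I_K$, and there you have made the problem harder than it needs to be: the only properties of $I_K$ your argument actually uses are that $I_K z \in S_{p+2}(K)$, that $\norm{I_K z}_{H^1(K)} \leq C\norm{z}_{H^1(K)}$, and that $h_K^{-1}\norm{z-I_K z}_{L_2(K)} + \abs{z-I_K z}_{H^1(K)} \leq C\norm{z}_{H^1(K)}$. The mean value $\bar{z}$ of $z$ over $K$ already has all three: it is a constant, hence lies in $S_{p+2}(K)$ for any bilinear map $\mF_K$; $\norm{\bar{z}}_{L_2(K)} \leq \norm{z}_{L_2(K)}$ and $\abs{z-\bar{z}}_{H^1(K)} = \abs{z}_{H^1(K)}$; and $\norm{z-\bar{z}}_{L_2(K)} \leq Ch_K\abs{z}_{H^1(K)}$ by the Poincar\'e--Friedrichs inequality with a constant controlled by shape regularity. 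This is precisely what the paper does, defining $R_K z = \bar{z} + R_K^0(z-\bar{z})$, so the ``hard part'' you flag --- a Scott--Zhang type quasi-interpolant on a bilinearly mapped quadrilateral with moment-based degrees of freedom --- can be dispensed with entirely. What your heavier choice buys is that $R_K$ then reproduces all of $S_{p+2}(K)$ and is a genuinely idempotent projector, whereas the paper's operator reproduces only constants; but idempotency is never used downstream, since Fortin's criterion (Lemma \ref{lemma:fortin}) requires only the uniform bound on $\mPi_h$ and the orthogonality conditions \eqref{eqn:projection condition}, both of which hold for either version.
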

\begin{proof}
$R_K z$ is defined as $R_K z = R_K^0 (z-\bar{z}) + \bar{z}$, where $\bar{z}$ is the constant function
\[
\bar{z} = \frac{\int_K z\,\mathrm{d}K}{\int_K \mathrm{d}K}
\]
which, by a scaling argument and a variant of Friedrichs' inequality, satisfies
\[
\norm{z-\bar{z}}_{L_2(K)} \leq Ch_K\abs{z}_{H^1(K)}
\]
It follows from the definition of $R_K$ that $R_K z -z = R_K^0 (z-\bar{z}) - (z-\bar{z})$ so that \eqref{eqn:R_K^0 first} and \eqref{eqn:R_K^0 second} imply \eqref{eqn:R_K first} and \eqref{eqn:R_K second}.

We have
\[
\norm{R_K z}_{L_2(K)} \leq \norm{R_K^0(z-\bar{z})}_{L_2(K)} + \norm{\bar{z}}_{L_2(K)}
\]
\end{proof}

\begin{lemma} \label{lemma:pi_K}
There exists an operator $\vpi_K: \boldsymbol{H}(\mathrm{div},K) \rightarrow \bV\!_{p+2}(K)$ such that
\begin{align}
\scp{\vq-\vpi_K \vq}{\veta}_K &= 0 \quad \forall \veta \in \bS_{p}(K)
\label{eqn:pi_K first} \\
\dual{\gamma}{(\vq - \vpi_K \vq) \cdot \vn}_{\partial K} &= 0 \quad \forall \gamma \in \tilde{\Gamma}_{p+1}(\partial K) 
\label{eqn:pi_K second} \\
\norm{\vpi_K \vq}_{\boldsymbol{H}(\mathrm{div},K)} &\leq C \norm{\vq}_{\boldsymbol{H}(\mathrm{div},K)}
\label{eqn:pi_K third}
\end{align}
for all $\vq \in \boldsymbol{H}(\mathrm{div},K)$.
\end{lemma}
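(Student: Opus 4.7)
The plan is to mimic the two-stage construction used for $R_K$ in Lemma \ref{lemma:R_K}, replacing the scalar bubble space $B(K)$ by a vector analogue $\boldsymbol{B}(K) \subset \bV\!_{p+2}(K)$. First I would construct an intermediate projector $\vpi_K^0$ onto $\boldsymbol{B}(K)$ that carries the orthogonality conditions \eqref{eqn:pi_K first} and \eqref{eqn:pi_K second} but whose local stability involves an $h_K^{-1}$ factor, and then set $\vpi_K \vq = \tilde{\vq} + \vpi_K^0(\vq - \tilde{\vq})$ for a cheap, $\bH(\mathrm{div},K)$-stable preliminary approximation $\tilde{\vq} \in \bV\!_{p+2}(K)$. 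A natural choice for $\tilde{\vq}$ is the lowest-order Piola-transformed Raviart--Thomas interpolant determined by mean normal fluxes on each edge of $K$; this is bounded on $\bH(\mathrm{div},K)$ by standard scaling arguments and satisfies a Friedrichs-type bound of the form $\|\vq - \tilde{\vq}\|_{\bL_2(K)} \leq C h_K \|\vq\|_{\bH(\mathrm{div},K)}$.

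For the bubble construction I would pull back to the reference element $\hat{K}$ and define $\boldsymbol{B}(\hat{K})$ as the subspace of the Raviart--Thomas space whose normal trace on each edge $e$ of $\hat{K}$ has the form $B_e s_e$ with $s_e \in \cP_p(e)$, where $B_e$ is the quadratic edge bubble; the interior polynomial degrees would then be further restricted so that the dimension of $\boldsymbol{B}(\hat{K})$ matches exactly the number of scalar conditions $2(p+1)^2 + 4(p+1)$ imposed by \eqref{eqn:pi_K first} and \eqref{eqn:pi_K second}. Uniqueness of $\vpi_K^0$ then reduces to the usual kernel argument: setting $\vq = \boldsymbol{0}$, condition \eqref{eqn:pi_K second} together with the edge-bubble factorization of the normal trace forces $(\vpi_K^0 \vq)\cdot \vn = 0$ on $\partial K$, after which condition \eqref{eqn:pi_K first} tested against the biquadratic interior bubble multiplied by a function in $\bS_p(\hat{K})$ forces $\vpi_K^0 \vq = \boldsymbol{0}$ in the interior. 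The bound $\|\vpi_K^0 \vq\|_{\bH(\mathrm{div},K)} \leq C(h_K^{-1}\|\vq\|_{\bL_2(K)} + \|\vnabla \cdot \vq\|_{L_2(K)})$ then follows from norm equivalence on the fixed finite-dimensional space $\boldsymbol{B}(\hat{K})$ combined with the scaling properties of the Piola transform.

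Verifying the three properties of $\vpi_K$ from the two-stage definition is routine once the above ingredients are in place: \eqref{eqn:pi_K first} and \eqref{eqn:pi_K second} hold because $\tilde{\vq}$ preserves the low-order moments by construction while $\vpi_K^0$ absorbs the remaining residual, and \eqref{eqn:pi_K third} follows by combining the $h_K^{-1}$ stability of $\vpi_K^0$ with the Friedrichs-type bound on $\vq - \tilde{\vq}$. The hard part will be that the Piola transform on a general bilinear quadrilateral introduces Jacobian factors that destroy the clean polynomial moment structure used in the kernel and dimension-counting argument; this is one reason why, as the paper announces, the overall error analysis is restricted to affine meshes, where $\mJ_K$ is constant and the Piola transform reduces to a simple linear rescaling that commutes with the moment and edge-bubble decomposition on $\hat{K}$.
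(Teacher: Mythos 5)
Your two-stage, bubble-plus-coarse-interpolant construction does not transfer from $H^1(K)$ to $\bH(\mathrm{div},K)$, and the failure is exactly at the two steps you flag as routine. First, the preliminary approximation: the lowest-order Raviart--Thomas interpolant determined by per-edge mean normal fluxes is not a bounded (indeed not even a well-defined) operator on $\bH(\mathrm{div},K)$, because the normal trace of a field in $\bH(\mathrm{div},K)$ lives only in $H^{-1/2}(\partial K)$ and can be paired only against functions in $H^{1/2}(\partial K)$; the indicator of a single edge is not such a function. Second, and more fundamentally, the Friedrichs-type bound $\norm{\vq-\tilde{\vq}}_{\bL_2(K)} \leq C h_K \norm{\vq}_{\bH(\mathrm{div},K)}$ is false for any fixed finite-dimensional choice of $\tilde{\vq}$: the $\bH(\mathrm{div})$ norm controls only the divergence, not the full gradient, so divergence-free fields of unit $\bL_2$ norm can oscillate at arbitrary frequency and admit no $O(h_K)$ approximation. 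Without that gain of one power of $h_K$, the $h_K^{-1}$ in the stability of your bubble projector cannot be cancelled and \eqref{eqn:pi_K third} does not follow. Your unisolvence argument also breaks: for a div-conforming field, vanishing normal trace on $\partial K$ leaves the tangential component free, so $\vpi_K^0\vq$ does not factor as $B_K$ times an element of $\bS_p(K)$, and the interior moments against $\bS_p(\hat{K})$ (dimension $2(p+1)^2$) are strictly fewer than the Raviart--Thomas interior degrees of freedom against $\cP_{p,p+1}(\hat{K})\times\cP_{p+1,p}(\hat{K})$ needed to pin down an element with zero normal trace; matching dimensions by ``further restricting'' the interior degrees is precisely the unproven step.

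The paper avoids all of this by dispensing with the two-stage construction. It defines $\vpi_{\hat{K}}$ on the reference element through the canonical $\boldsymbol{\cR\cT}_{p+1}(\hat{K})$ degrees of freedom, keeping the full interior moments against $\cP_{p,p+1}(\hat{K})\times\cP_{p+1,p}(\hat{K})$ (so unisolvence is inherited from the standard RT element) but taking boundary moments only against the continuous space $\tilde{\Gamma}_{p+1}(\partial \hat{K})\subset H^{1/2}(\partial\hat{K})$, which is a legitimate continuous functional on $\bH(\mathrm{div},\hat{K})$; the range is restricted to the subspace $\boldsymbol{\cR}(\hat{K})$ on which the complementary discontinuous edge moments vanish. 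Boundedness then requires no Friedrichs-type gain: the $\bL_2$ part scales neutrally under dilation, and the divergence is controlled through the commuting identity $\hat{\vnabla}\cdot(\vpi_{\hat{K}}\hat{\vq}) = \hat{Q}_{p+1}\,\hat{\vnabla}\cdot\hat{\vq}$, which transfers to $K$ as $\vnabla\cdot(\vpi_K\vq)=\Lambda_K(\vnabla\cdot\vq)$ with $\Lambda_K$ bounded on $L_2(K)$ uniformly in $h_K$. Your closing remark about the affine restriction is correct in spirit, but in the paper it enters through property \eqref{eqn:pi_K first} --- one needs $\mJ_K^T\hat{\veta}$ to remain in the RT interior test space, which holds when $\mJ_K$ is constant --- not through the kernel or dimension-counting argument.
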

\begin{proof}
We start by constructing a bounded projector $\vpi_{\hat{K}} : \boldsymbol{H}(\mathrm{div},\hat{K}) \rightarrow \boldsymbol{\cQ}_{p+2}(\hat{K})$ for the rectangular master element $\hat{K}$. The construction is based on the observation that \eqref{eqn:pi_K first} and \eqref{eqn:pi_K second} resemble closely the canonical degrees of freedom in the Raviart-Thomas space $\boldsymbol{\cR\cT}_{p+1}(\hat{K}) = \cP_{p+2,p+1}(\hat{K}) \times \cP_{p+1,p+2}(\hat{K})$. Namely, if we denote by $\Gamma_{p+1}^\perp(\partial \hat{K})$ the $L^2(\partial \hat{K})$-orthogonal complement of $\tilde{\Gamma}_{p+1}(\partial \hat{K})$ in $\Gamma_{p+1}(\partial \hat{K})$, and define
\[
\boldsymbol{\cR}(\hat{K}) = \{ \hat{\vq} \in \boldsymbol{\cR\cT}_{p+1}(\hat{K}) \; : \; \dual{\hat{\gamma}}{\hat{\vq} \cdot \hat{\vn}}_{\partial K} = 0 \quad \forall \hat{\gamma} \in \Gamma_{p+1}^\perp(\partial \hat{K}) \}
\]
then the operator $\vpi_{\hat{K}}: \boldsymbol{H}(\mathrm{div},\hat{K}) \rightarrow \boldsymbol{\cR}(\hat{K})$ is indeed well-defined by the conditions
\[
\begin{aligned}
\scp{\vpi_{\hat{K}} \hat{\vq}}{\hat{\veta}}_{\hat{K}} &= \scp{\hat{\vq}}{\hat{\veta}}_{\hat{K}} \quad \forall \hat{\veta} \in \cP_{p,p+1}(\hat{K}) \times \cP_{p+1,p}(\hat{K})
\\
\dual{\vpi_{\hat{K}} \hat{\vq} \cdot \hat{\vn}}{\hat{\eta}}_{\partial \hat{K}} &= \dual{\hat{\vq} \cdot \hat{\vn}}{\hat{\eta}}_{\partial \hat{K}} \quad \forall \hat{\eta} \in \tilde{\Gamma}_{p+1}(\partial \hat{K}) 
\end{aligned}
\]
This is true because $\vpi_{\hat{K}} \hat{\vq} \in \cR(\hat{K})$ is a function in $\boldsymbol{\cR\cT}_{p+1}(\hat{K})$ and all of its degrees of freedom must vanish when $\hat{\vq} = \boldsymbol{0}$.

The corresponding projection for an arbitrary element $K = \mF_K(\hat{K})$ can be defined using the Piola transform as $\vpi_K = \mP_K \circ \vpi_{\hat{K}} \circ \mP_K^{-1}$. We have $\mJ_K^T \hat{\veta} \in \cP_{p,p+1}(\hat{K}) \times \cP_{p+1,p}(\hat{K})$ whenever $\hat{\veta} \in \cQ_p(\hat{K})$ so that \eqref{eqn:pi_K first} and \eqref{eqn:pi_K second} follow from the identities
\[
\begin{aligned}
&\scp{\vq-\vpi_K \vq}{\veta}_K = \scp{\mJ_K (\hat{\vq} - \vpi_{\hat{K}} \hat{\vq})}{\hat{\veta}}_{\hat{K}} = \scp{\hat{\vq} - \vpi_{\hat{K}} \hat{\vq}}{\mJ_K^T \hat{\veta}}_{\hat{K}} \\
&\dual{(\vq - \vpi_K \vq) \cdot \vn}{\vgamma}_{\partial K} = \dual{(\hat{\vq} - \vpi_{\hat{K}} \hat{\vq}) \cdot \hat{\vn}}{\hat{\vgamma}}_{\partial \hat{K}}
\end{aligned}
\]

To prove \eqref{eqn:pi_K third}, we first assume that $h_K=1$ and notice that $\vpi_{\hat{K}}$ from $\bH(\mathrm{div},\hat{K})$ to $\bL_2(\hat{K})$, $\mP_K$ from $\bH(\mathrm{div},\hat{K})$ to $\bH(\mathrm{div},K)$ and $\mP_K^{-1}$ from $\bH(\mathrm{div},K)$ to $\bH(\mathrm{div},\hat{K})$ are bounded operators with bounds depending only on the shape of $K$. Therefore, $\vpi_K$ is bounded from $\bH(\mathrm{div},K)$ to $\bL_2(K)$. 

To extend the $\bL_2(K)$-bound to an arbitrary convex quadrilateral $K$, we follow \cite{Arnold2005} and introduce the dilated element $\tilde{K} = \mD(K)$ defined by $\mD(\vx) = h_K^{-1} \vx$. We have then $\mF_{\tilde{K}} = \mD \circ \mF_K$ so that $\vpi_{\tilde{K}} = \mP_{\tilde{K}} \circ \vpi_K \circ \mP_{\tilde{K}}^{-1}$ and for any $\vq \in \bH(\mathrm{div},K)$, let $\tilde{\vq} = h_K \vq(h_K \tilde{\vx})$. Then,
\[
\begin{aligned}
\norm{\tilde{\vq}}_{\bL_2(\tilde{K})} &= \norm{\vq}_{\bL_2(K)} \\
\norm{\tilde{\vnabla} \cdot \tilde{\vq}}_{\bL_2(\tilde{K})} &= h_K^2 \norm{\vnabla \cdot \vq}_{\bL_2(\tilde{K})} = h_K \norm{\vnabla \cdot \vq}_{L_2(K)}
\end{aligned}
\]
so that we have
\[
\norm{\vpi_K \vq}_{\bL_2(K)} = \norm{h_K^{-1} \vpi_{\tilde{K}} \hat{\vq}}_{\bL_2(K)} = \norm{\vpi_{\hat{K}} \vq}_{\bL_2(\tilde{K})} \leq C \norm{\tilde{\vq}}_{\bH(\mathrm{div},\tilde{K})} \leq C(\norm{\vq}_{\bL_2(K)} + h_K \norm{\vnabla \cdot \vq}_{L_2(K)})
\]

To obtain an $h$-independent bound for the norm of the divergence, we use the identities
\[
\begin{aligned}
(\vnabla \cdot \vq) \circ \mF_K &= \frac{\hat{\vnabla} \cdot \hat{\vq}}{\det \mJ_K(\hat{\vx})} \\
\hat{\vnabla} \cdot (\vpi_{\hat{K}} \hat{\vq}) &= \hat{Q}_{p+1} \hat{\vnabla} \cdot \hat{\vq}
\end{aligned}
\]
where $\hat{Q}_{p+1}$ denotes the $L_2(\hat{K})$-projector onto $\cQ_{p+1}(\hat{K})$, to write
\[
\vnabla \cdot (\vpi_K \vq) = \frac{\hat{\vnabla} \cdot (\vpi_{\hat{K}} \hat{\vq})}{\det \mJ_K} = \frac{\hat{\Pi}_{p+1} \hat{\vnabla} \cdot \hat{\vq}}{\det \mJ_K} =  \frac{\hat{\Pi}_{p+1} [\det \mJ_K (\vnabla \cdot \vq) \circ \mF_K]}{\det \mJ_K}
\]
In other words $\vnabla \cdot (\vpi_K \vq) = \Lambda_K (\vnabla \cdot \vq)$, where $\Lambda_K:L_2(K) \rightarrow L_2(K)$ is defined by
\[
\Lambda_K f = \frac{\hat{\Pi}_{p+1} [\det(\mJ_K) (f \circ \mF_K)]}{\det(\mJ_K)} \circ \mF_K^{-1}
\]
for any scalar function $f$.
Now \eqref{eqn:pi_K third} follows because:
\begin{equation} \label{eqn:bound for Lambda_K}
\norm{\Lambda_K f}_{L_2(K)} \leq C \norm{f}_{L_2(K)} \quad \forall f \in L_2(K)
\end{equation}
The bound \eqref{eqn:bound for Lambda_K} is obvious for elements with unit diameter and can be extended to elements with arbitrary diameter with a constant depending only on the shape of $K$ by using the dilation $\vx \hookrightarrow h_K^{-1} \vx$.
\end{proof}

We can now state our main approximation result:
\begin{theorem}
Let $\vu = (\mV,\mM,w,\vpsi,r_h,\hat{w},\hat{\vpsi},\hat{V}_n,\hat{\mM}_n)$ denote the exact solution to the Reissner-Mindlin model and $\vu_h = (\mV_h,\mM_h,w_h,\vpsi_h,r_h,\hat{w}_h,\hat{\vpsi}_h,\hat{V}_{n,h},\hat{\mM}_{n,h})$ the DPG approximation of degree $p$ on an affine mesh with maximum element diameter $h$. The approximation error
\[
\begin{split}
e &= \norm{\mV-\mV_h}_{L_2(\Omega)} + \norm{\mM-\mM_h}_{L_2(\Omega)} + \norm{w-w_h}_{L_2(\Omega)} + \norm{\vpsi-\vpsi_h}_{L_2(\Omega)} + \norm{r-r_h}_{L_2(\Omega)} \\
&+ \norm{\hat{w}-\hat{w}_h}_{H^{1/2}(\partial \Omega_h)} + \norm{\hat{\vpsi}-\hat{\vpsi}_h}_{\boldsymbol{H}^{1/2}(\partial \Omega_h)} + \norm{\hat{V}_n-\hat{V}_{n,h}}_{H^{-1/2}(\partial \Omega_h)} + \norm{\hat{\mM}_n-\hat{\mM}_{n,h}}_{H^{-1/2}(\partial \Omega_h)}
\end{split}
\]
satisfies an a priori estimate
\begin{equation} \label{eqn:error estimate}
e \leq Ct^{-1}\left(\norm{\mV}_{\bH^{p+2}(\Omega)} + \norm{\mM}_{\bH^{p+2}(\Omega)} + \norm{w}_{H^{p+2}(\Omega)} + \norm{\vpsi}_{\bH^{p+2}(\Omega)}\right)
\end{equation}
where the constant $C$ is independent of $h$ and $t$ but depends on $p$ and $\Omega$.
\end{theorem}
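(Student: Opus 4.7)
\medskip
\noindent\textbf{Proof plan.} The strategy is to combine the quasi-optimality estimate \eqref{eqn:best approximation property} of Lemma \ref{lemma:fortin} with standard polynomial approximation on affine quadrilateral meshes. First I would assemble a Fortin projector $\mPi_h : \cbV \to \cbV^r$ componentwise from the building blocks of Lemmas \ref{lemma:R_K} and \ref{lemma:pi_K}: for a test tuple $\vv = (\vq,\mtau,z,\vphi,s)$, set $(\mPi_h \vv)\!\!\mid_K = (\vpi_K \vq, \vpi_K \mtau, R_K z, R_K \vphi, Q_K s)$, where $Q_K$ is the $L_2$-projection onto $S_p(K)$ (and $\vpi_K$, $R_K$ act row/componentwise on tensors and vectors). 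The orthogonality requirement \eqref{eqn:projection condition} is then verified term by term in \eqref{eqn:definition of forms}: the volumetric pairings with $\mV_h, \mM_h, w_h, \vpsi_h, r_h \in S_p$ (or $\bV_p$) vanish because of \eqref{eqn:R_K first}, \eqref{eqn:pi_K first} and the definition of $Q_K$; the skeleton pairings with $\hat{w}_h,\hat{\vpsi}_h \in \tilde{\Gamma}_{p+1}(\partial K)$ and $\hat{V}_{n,h},\hat{\mM}_{n,h} \in \Gamma_p(\partial K)$ vanish because of \eqref{eqn:pi_K second} and \eqref{eqn:R_K second} respectively. Boundedness of $\mPi_h$ with $h$-independent constant $c$ follows from \eqref{eqn:R_K third} and \eqref{eqn:pi_K third}.

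Second, having verified Fortin's criterion, Lemma \ref{lemma:fortin} delivers
\[
\norm{\vu - \vu_h}_{\cbU} \leq \frac{Cc}{\alpha} \inf_{\vw_h \in \cbU_h} \norm{\vu - \vw_h}_{\cbU},
\]
where by Lemma \ref{lemma:norm equivalence} the stability constant $\alpha$ is proportional to $t$; this is precisely the origin of the $t^{-1}$ prefactor in \eqref{eqn:error estimate}. It remains to bound the best approximation error by choosing $\vw_h$ to be a componentwise interpolant.

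Third, for the field components $(\mV,\mM,w,\vpsi,r)$ I would take standard $L_2$-stable quadrilateral interpolants into $\bV_p(K)$ and $S_p(K)$. On an affine mesh the Piola and bilinear maps reduce to affine maps, so the Bramble–Hilbert lemma and a scaling argument yield $L_2$-errors of order $h^{p+1}$ controlled by the $H^{p+2}$-seminorms of the respective exact fields; the component $r$ is controlled by the regularity already available through $\mM$ via the second line of \eqref{eqn:reissner-mindlin system}. For the skeleton unknowns I would exploit the extension-based definitions of the trace norms: for $\hat{w} - \hat{w}_h$ (respectively $\hat{\vpsi} - \hat{\vpsi}_h$) pick a global $H^1_0$-conforming $S_{p+1}$ interpolant of the continuous extension of $\hat{w}$ and restrict to $\partial \Omega_h$, so that $\norm{\hat{w} - \hat{w}_h}_{H^{1/2}_0(\partial\Omega_h)} \leq \norm{w^{\mathrm{ext}} - I_h w^{\mathrm{ext}}}_{H^1(\Omega)} \leq Ch^{p+1} \norm{w}_{H^{p+2}(\Omega)}$. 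Analogously, for $\hat{V}_n - \hat{V}_{n,h}$ and $\hat{\mM}_n - \hat{\mM}_{n,h}$ I would take the normal trace of a global Raviart–Thomas interpolant of $\mV$, $\mM$ and use the $\bH(\mathrm{div},\Omega)$ norm to dominate the $H^{-1/2}$ trace norm, giving again $O(h^{p+1})$ bounds in terms of $\bH^{p+2}$-norms.

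The main obstacle in the argument is the verification of Fortin's orthogonality condition together with the $h$-independence of $\norm{\mPi_h}$: one must make sure that the moment conditions prescribed by Lemmas \ref{lemma:R_K} and \ref{lemma:pi_K} are exactly those required to annihilate $\cB(\vw_h, \cdot - \mPi_h \cdot)$ for every degree of freedom of $\cbU_h$, and that the same conditions are compatible on neighboring elements so that $\mPi_h$ descends to a continuous global operator. The remaining ingredients — the quasi-optimality of Lemma \ref{lemma:fortin}, the $t$-dependent norm equivalence of Lemma \ref{lemma:norm equivalence}, and the classical Bramble–Hilbert/Piola interpolation estimates on affine quadrilaterals — then combine directly to yield \eqref{eqn:error estimate}, with the explicit dependence of $C$ on $p$ and $\Omega$ absorbed into the interpolation constants and the shape-regularity constant of the mesh.
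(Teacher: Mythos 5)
Your route is the paper's route: assemble the Fortin projector componentwise from Lemmas \ref{lemma:R_K} and \ref{lemma:pi_K}, invoke Lemma \ref{lemma:fortin} for quasi-optimality with the stability constant $\alpha \propto t$ from Lemma \ref{lemma:norm equivalence}, and close with standard interpolation estimates, handling the trace unknowns through the extension-based definitions of the $H^{\pm 1/2}$ norms exactly as the paper does (global $H^1_0$-conforming interpolants for $\hat{w},\hat{\vpsi}$ and normal traces of $\bH(\mathrm{div})$-conforming interpolants for $\hat{V}_n,\hat{\mM}_n$). Two technical points deserve correction, though. First, your $Q_K$ projects onto $S_p(K)$, which does not annihilate the symmetry-constraint pairing $\scp{\mM_h}{(s-Q_K s)\mJ}_K$: for $\mM_h$ with rows in $\bV_p(K)$ the quantity $M_{h,12}-M_{h,21}$ pulls back to $\cP_{p+1,p}(\hat{K})+\cP_{p,p+1}(\hat{K})$, which exceeds $\cQ_p(\hat{K})$; the paper takes $Q_K$ onto $S_{p+2}(K)$ for this reason. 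Second, the claim that the volumetric pairings vanish directly from \eqref{eqn:R_K first}, \eqref{eqn:pi_K first} is too quick for the terms involving derivatives of the test function, namely $\scp{v_h}{\vnabla\cdot(\vq-\vpi_K\vq)}_K$ and $\scp{\veta_h}{\vnabla(z-R_K z)}_K$: these require an integration by parts that uses the volume and boundary moment conditions \emph{together}, and it is precisely here (together with the first-column conditions pairing Piola-transformed trial fields against $\vq-\vpi_K\vq$) that the affine-mesh restriction and the enrichment degree $p+3$ enter the Fortin verification — not only in the interpolation estimates, as your write-up suggests. With these repairs your argument coincides with the paper's proof.
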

\begin{proof}
We start by defining a global projection operator $\mPi_h: \cbV \rightarrow \cbV^{p+3}$ piecewise\footnote{The operator $\vpi_K$ acts on tensors row-wise.}:
\[
(\mPi_h \vv)\!\!\mid_K = (\vpi_K \mtau,\vpi_K \vq,R_K z, \boldsymbol{R}_K \vphi,Q_K \mu)
\]
where $\vpi_K$ and $R_K$ are the projectors defined in Lemmas \ref{lemma:pi_K} and \ref{lemma:R_K} and $Q_K$ is the $L_2$-projector onto $S_{p+2}(K)$. The projectors satisfy
\begin{equation} \label{eqn:orthogonality}
\begin{alignedat}{3}
\scp{k^{-1}t^2 \veta_h + \vtheta_h}{\vq - \vpi_K \vq}_K &= 0, & 
\quad \scp{v_h}{\vnabla \cdot (\vq - \vpi_K \vq)}_K &= 0, & 
\quad \dual{\hat{v}_h}{(\vq - \vpi_K \vq) \cdot \vn}_{1/2,\partial K} &= 0, \\
\scp{\cC^{-1}\msigma_h + \rho_h\mJ}{(\mtau - \vpi_K \mtau)}_K &= 0, & \scp{\theta_h}{\vnabla \cdot (\mtau - \vpi_K \mtau)}_K &= 0, & 
\quad \dual{\hat{\vtheta}_h}{(\mtau-\vpi_K\mtau)\vn}_{1/2,\partial K} &=0, \\
&& \scp{\veta_h}{\vnabla(z - R_K z)}_K &= 0, 
\quad &\dual{z-R_K z}{\hat{\eta}_{n,h}}_{\partial K} &= 0, \\
&& \scp{\sigma_h}{\vnabla(\vphi - \bR_K \vphi)}_K &= 0, &
\quad \dual{\vphi- \bR_K \vphi}{\hat{\msigma}_n}_{1/2,\partial K} &= 0, \\
\scp{\msigma_h}{(s-Q_K s)\mJ}_K &= 0 
\end{alignedat}
\end{equation}
for all $\vw_h = (\veta_h,\msigma_h,v_h,\vtheta_h,\rho_h,\hat{v}_h,\hat{\vtheta}_h,\hat{\eta}_{n,h},\hat{\msigma}_{n,h}) \in \cbU_h$. The first and third columns follow directly from Lemmas \ref{lemma:pi_K}, \ref{lemma:R_K} and the definition of $Q_K$. The second column is proved using the same Lemmas in conjunction with integration by parts. The first equality in the second column holds because
\[
\begin{aligned}
\scp{v_h}{\vnabla \cdot (\vq - \vpi_K \vq)}_K
&= \scp{\hat{v}_h}{\hat{\vnabla} \cdot (\hat{\vq} - \vpi_{\hat{K}} \hat{\vq})}_{\hat{K}} \\
&= \dual{\hat{v}_h}{(\hat{\vq}-\vpi_{\hat{K}} \hat{\vq}) \cdot \hat{\vn}}_{\partial \hat{K}} 
- \scp{\hat{\vnabla}\hat{v}_h}{\hat{\vq} - \vpi_{\hat{K}}\hat{\vq}}_K \\
&= 0
\end{aligned}
\]
The third equality in the second column holds because
\[
\begin{aligned}
\scp{\veta_h}{\vnabla(z - R_K z)}_K 
&= \dual{\veta_h \cdot \vn}{z - R_{K} z}_{\partial K} - \scp{\vnabla \cdot \veta_h}{z-R_{K} z}_{K} \\
&= \dual{\hat{\veta}_h \cdot \hat{\vn}}{\hat{z} - R_{\hat{K}} \hat{z}}_{\partial \hat{K}} - \scp{\hat{\vnabla} \cdot \hat{\veta}_h}{\hat{z}-R_{\hat{K}} \hat{z}}_{\hat{K}} \\
&= 0
\end{aligned}
\]
The second and fourth equality can be proven in the same way so that we have established the condition \eqref{eqn:projection condition} of Lemma \ref{lemma:fortin}. In other words, we have established the best approximation property \eqref{eqn:best approximation property}.

A more quantitative error estimate is obtained by using results from approximation theory. For smooth enough vector and scalar fields $\mV$ and $w$, there exist interpolants $\tilde{\mV} \in \bH(\mathrm{div},\Omega_h)$ and $\tilde{w} \in H^1(\Omega_h)$ such that
\[
\tilde{\mV}\!\!\mid_K \in \mV_p(K), \quad \tilde{w}\!\!\mid_K \in S_p(K) \quad \forall K \in \Omega_h,
\]
and
\[
\begin{aligned}
\norm{\mV - \tilde{\mV}}_{\bL_2(\Omega)} &\leq C h^{p+1}\norm{\mV}_{H^{p+1}(\Omega)}, \\
\norm{w - \tilde{w}}_{L_2(\Omega)} &\leq C h^{p+1}\norm{w}_{H^{p+1}(\Omega)}.
\end{aligned}
\]
Here $\tilde{w}$ is the standard interpolant of $w$ and $\tilde{\mV}$ denotes the projection of $\mV$ to the Raviart-Thomas space, see \cite{Girault1986a,Arnold2005}.

We can also construct interpolants satisfying $\breve{w}\!\!\mid_{\partial \Omega_h} \in H^{1/2}_0(\partial \Omega_h)$ and $\breve{\mV}\cdot \vn\!\!\mid_{\partial \Omega_h} \in H^{-1/2}(\partial \Omega_h)$ such that
\[
\breve{w}\!\!\mid_{\partial K} \in \tilde{\Gamma}_{p+1}(\partial K), \quad \breve{\mV}\cdot \vn \!\!\mid_{\partial K} \in \Gamma_p(\partial K) \quad \forall K \in \Omega_h.
\]
Since the traces $\hat{w}$ and $\hat{V}_n$ associated to the exact solution equal the traces of the corresponding field variables, we are allowed to write
\[
\begin{aligned}
\min_{\hat{v}_h} \norm{\hat{w} - \hat{v}_h}_{H^{1/2}(\partial \Omega_h)} &\leq 
\norm{w-\breve{w}}_{H^{1}(\Omega)} \\
\min_{\hat{\eta}_{n,h}} \norm{\hat{V}_n - \hat{\eta}_{n,h}}_{H^{-1/2}(\partial \Omega_h)} &\leq \norm{\mV-\breve{\mV}}_{\bH(\mathrm{div},\Omega)}
\end{aligned}
\]
Defining $\breve{w}$ as the regular interpolant of $w$ with quadrilateral elements of degree $p+1$ and $\breve{\mV}$ as the projection of $\mV$ into the Arnold-Boffi-Falk space of index $p$, we obtain the error estimates
\[
\begin{aligned}
\norm{w-\breve{w}}_{H^{1}(\Omega)} &\leq Ch^{p+1}\norm{w}_{H^{p+2}(\Omega)}, \\
\norm{\mV-\breve{\mV}}_{\bH(\mathrm{div},\Omega)} &\leq Ch^{p+1}(\norm{\mV}_{\bH^{p+1}} + \norm{\vnabla \cdot \mV}_{\bH^{p+1}(\Omega)}).
\end{aligned}
\]

Identical constructions can be carried out for the remaining solution components $\vpsi$, $\mM$, $r$, $\hat{\vpsi}$, and $\hat{\mM}_n$. Hence, the estimate \eqref{eqn:error estimate} is established.
\end{proof}

\begin{remark}
Notice that the restriction of the proof to affine mesh sequences arises from the terms involving $\veta_h$ and $\msigma_h$ in the first column of \eqref{eqn:orthogonality}. Namely, when the mapping $\mF_K$ is not affine, the use of Piola transform introduces a non-constant factor $1/\det{\mJ_K}$ violating the orthogonality conditions established in Lemma \ref{lemma:pi_K}. On an affine mesh, the same terms dictate the enrichment degree to be three, since we need to apply Lemma \ref{lemma:pi_K} also when $\veta \in \bV_p(K)$. On the other hand, the use of Piola transformation for the shear force and bending moment is necessary in general to match the normals in $\hat{V}_n$ and $\hat{\mM}_n$ with the ones in $\mV \cdot \vn$ and $\mM\vn$.
\end{remark}

\begin{remark}
When bounding the approximation error of $\hat{V}_n$ and $\hat{\mM}_n$, use of Raviart-Thomas projector would imply loss of one power of $h$ in the convergence rate on a general mesh, see \cite{Arnold2005}. In the DPG approximation the resultant tractions can be extended as well to the mentioned Arnold-Boffi-Falk space defined on the reference element as $\cbA\cbB\cbF_{p}(\hat{K})=\cP_{p+2,p}(\hat{K}) \times \cP_{p,p+2}(\hat{K})$ since the normal components of the elements of this space are also polynomials of degree $p$ on the edges.
\end{remark}

\section{Numerical Results} \label{sec:numerical results}
We study the convergence of the DPG method when applied to solve the model problem proposed in \cite{Chinosi1995}. The problem consists of a fully clamped, homogeneous and isotropic square plate loaded by the pressure distribution
\[
\begin{split}
p(x,y) = \frac{1}{12(1 - \nu^2)} [12 y (y - 1) (5 x^2 - 5 x + 1) (2 y^2 (y - 1)^2 + 
      x (x - 1) (5 y^2 - 5 y + 1)) \\
      + 
   12 x (x - 1) (5 y^2 - 5 y + 1) (2 x^2 (x - 1)^2 + 
      y (y - 1) (5 x^2 - 5 x + 1))]
\end{split}
\]
on the computational domain $\Omega = (0,1) \times (0,1)$. The problem has a closed form analytic solution than can be used to address the accuracy of numerical solution schemes. 

We use the values $\nu = 0.3$ and $\kappa = 5/6$ for the Poisson ratio and the shear correction factor, respectively. We set $p=1$ and compute the DPG solution using uniform and trapezoidal $N \times N$-meshes, with $N$ varying as $N=4,8,16,32,64$, see Fig.~\ref{fig:meshes}. 
\begin{figure}[h]
\centering
\mbox{
\includegraphics[width=0.15\linewidth]{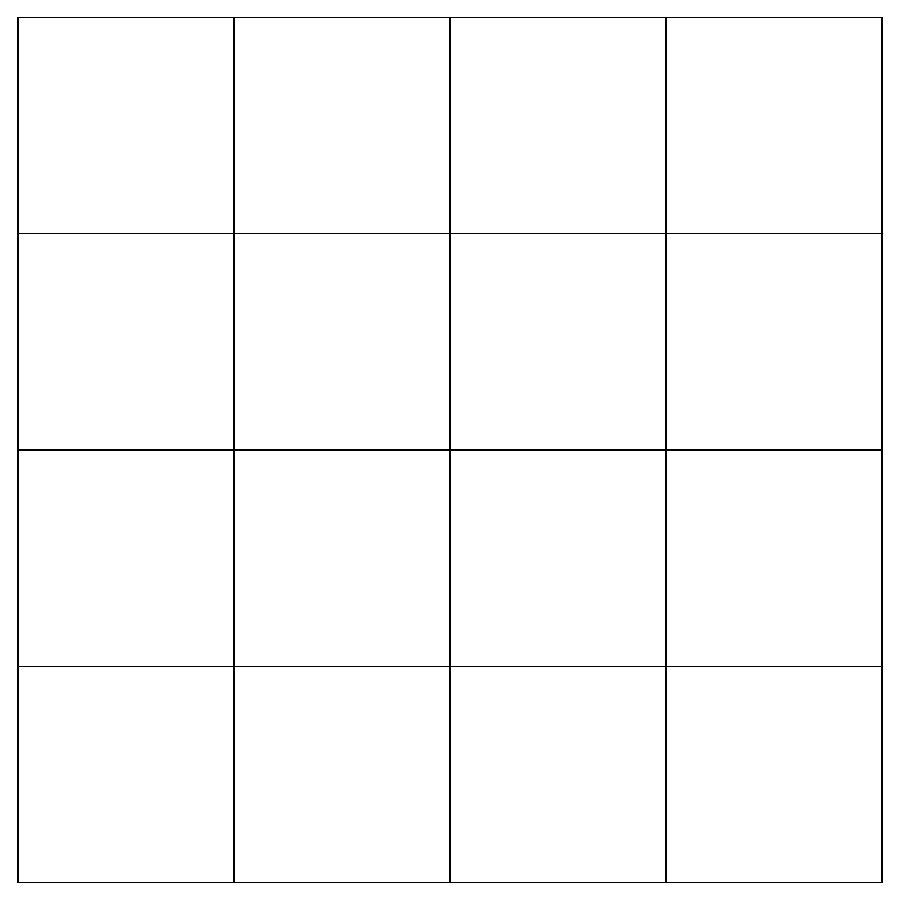}
~\quad~\includegraphics[width=0.15\linewidth]{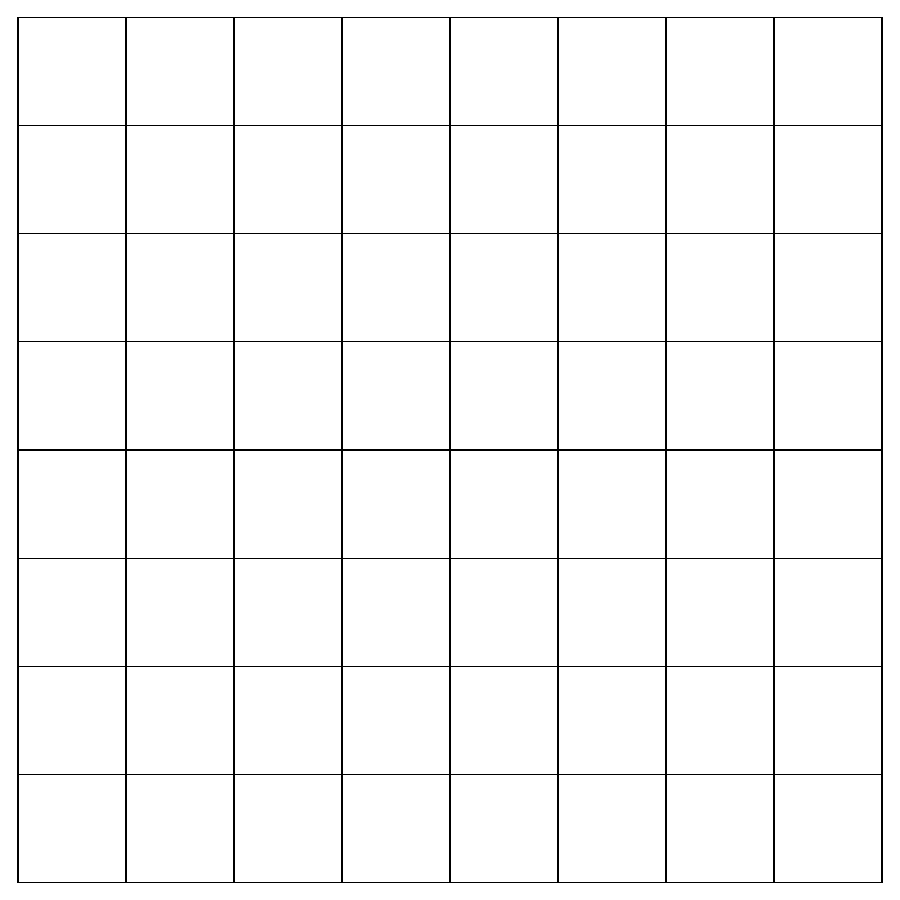}
~\quad~\includegraphics[width=0.15\linewidth]{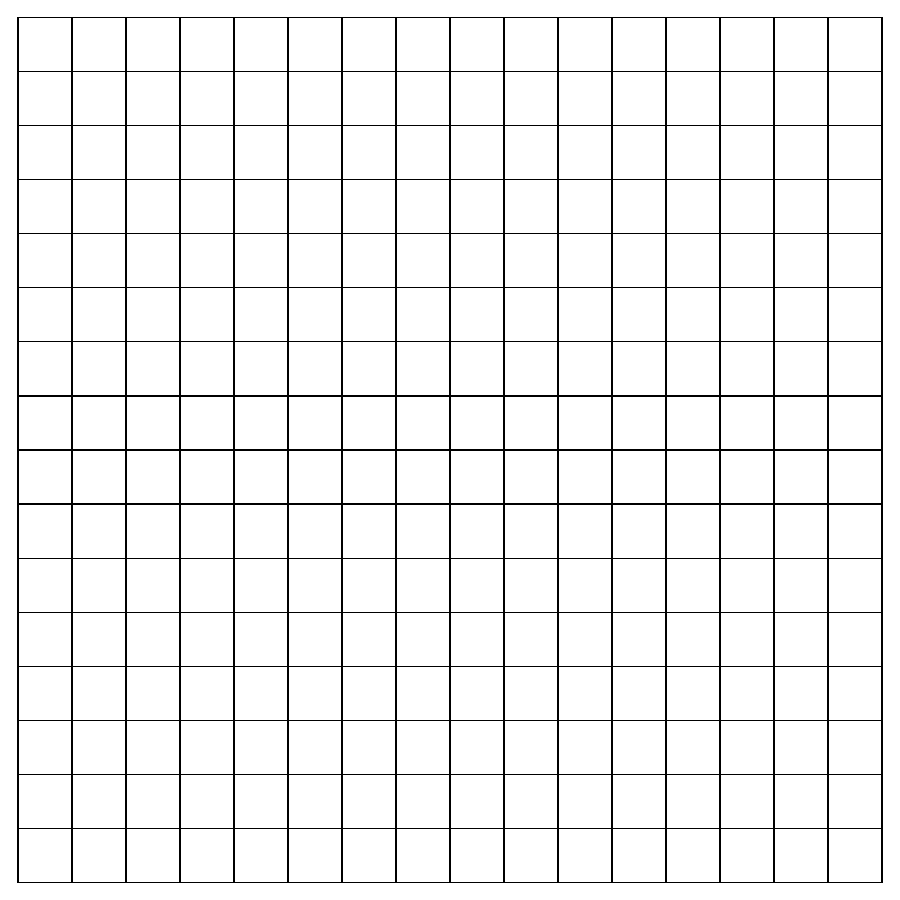}} \quad 
\parbox{0.2\linewidth}{(Uniform) \\ \vspace{20mm}} 
\mbox{
\includegraphics[width=0.15\linewidth]{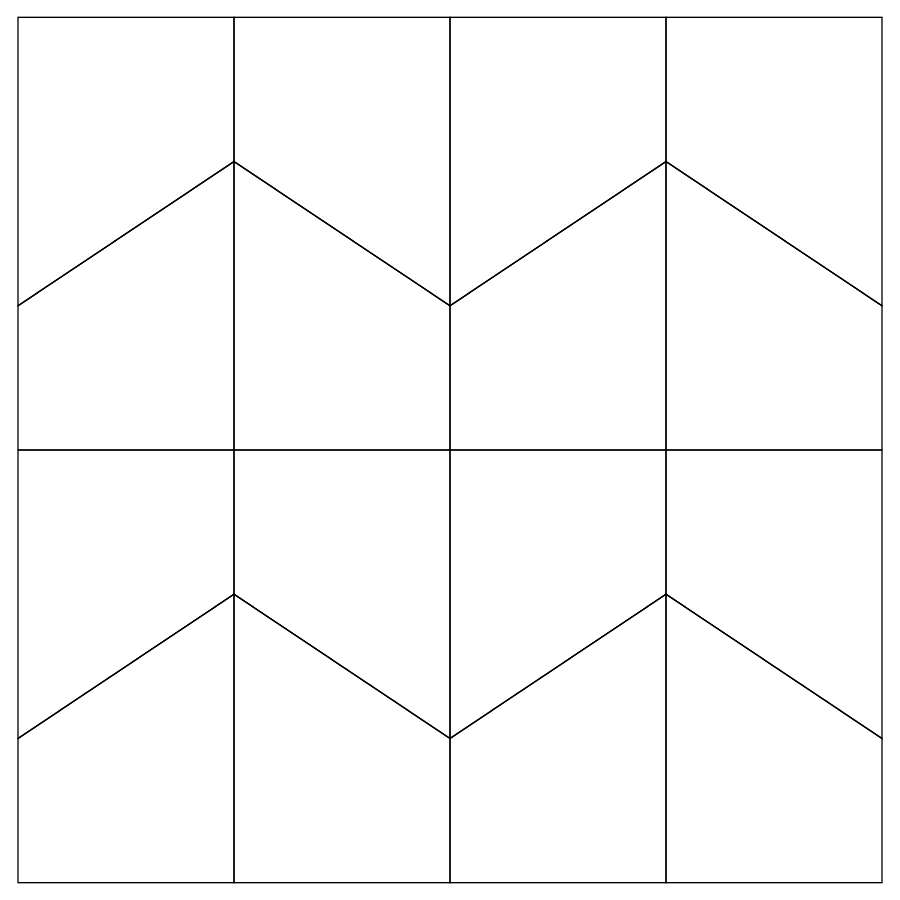}
~\quad~\includegraphics[width=0.15\linewidth]{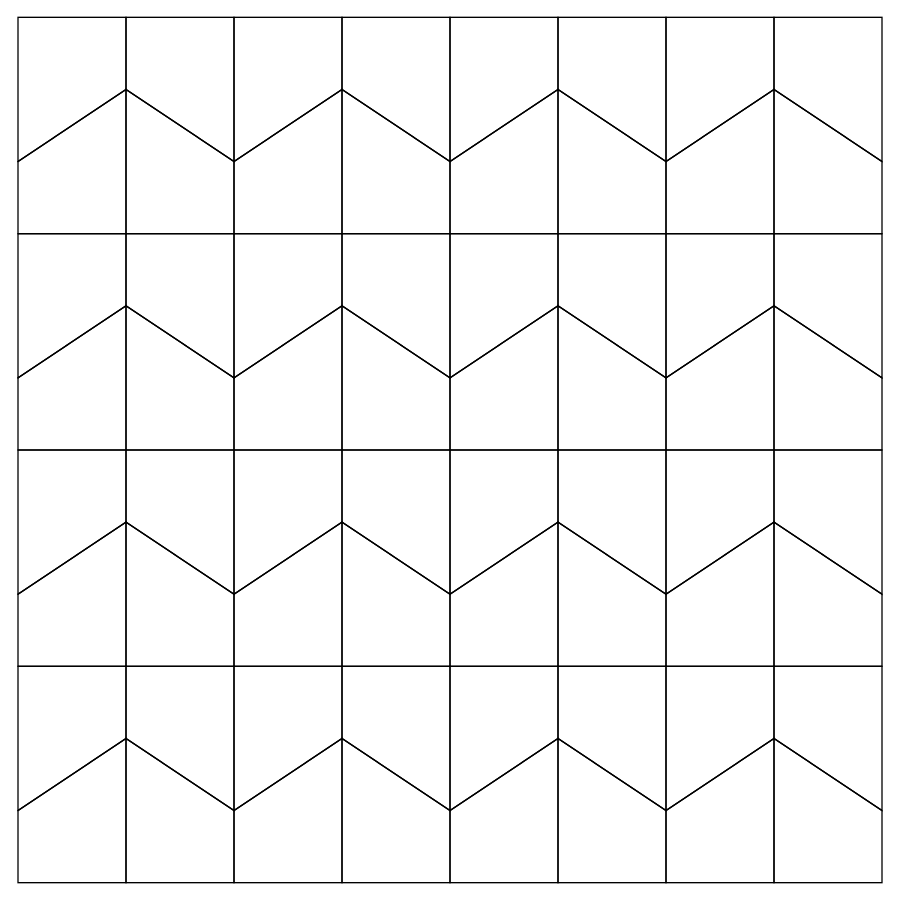}
~\quad~\includegraphics[width=0.15\linewidth]{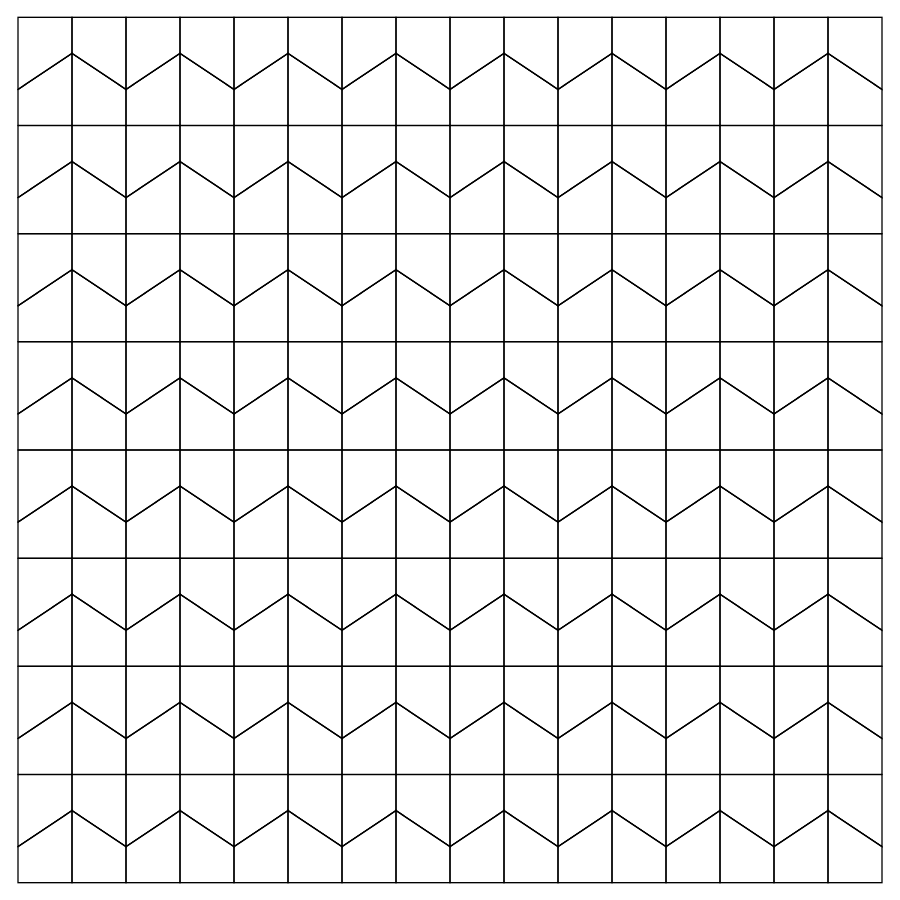}} \quad 
\parbox{0.2\linewidth}{(Trapezoidal) \\ \vspace{20mm}}
\caption{Uniform and trapezoidal mesh sequences.}
\label{fig:meshes}
\end{figure}

The results for the thickness values $t=0.1$ and $t=0.001$ are summarized in Figs.~\ref{fig:PlateErrors_10} and \ref{fig:PlateErrors_1000}, respectively. In the figures we show the relative errors in the $L_2$ norm for all quantities of interest:
\[
\frac{\norm{\mV-\mV_h}}{\norm{\mV}}, \quad \frac{\norm{\mM-\mM_h}}{\norm{\mM}}, \quad \frac{\norm{w-w_h}}{\norm{w}}, \quad \frac{\norm{\vpsi-\vpsi_h}}{\norm{\vpsi}}
\]
\begin{figure}[h]
\centering
\parbox{0.5\linewidth}{\centering Uniform \\ \includegraphics[width=\linewidth]{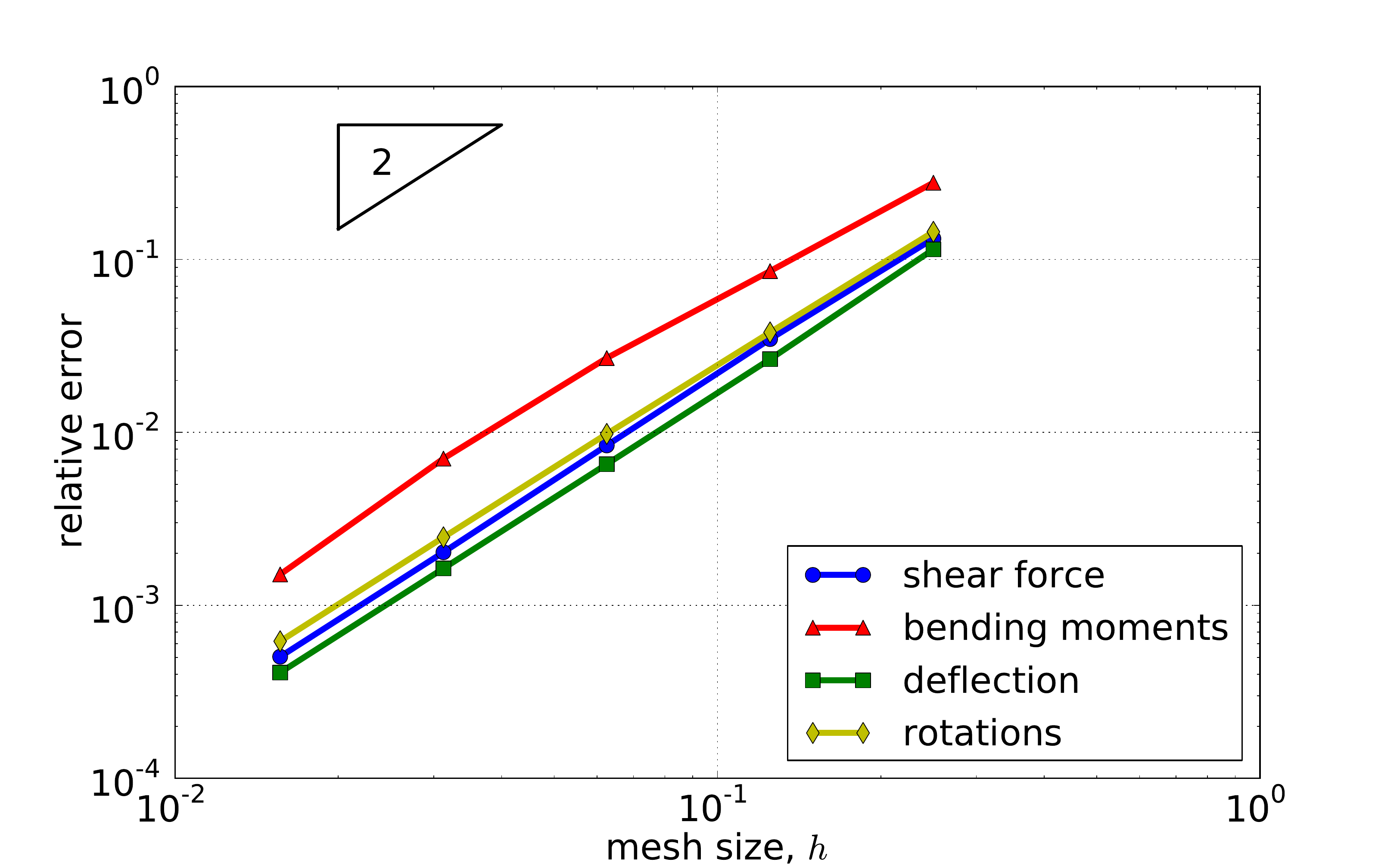}}~
\parbox{0.5\linewidth}{\centering Trapezoidal \\ \includegraphics[width=\linewidth]{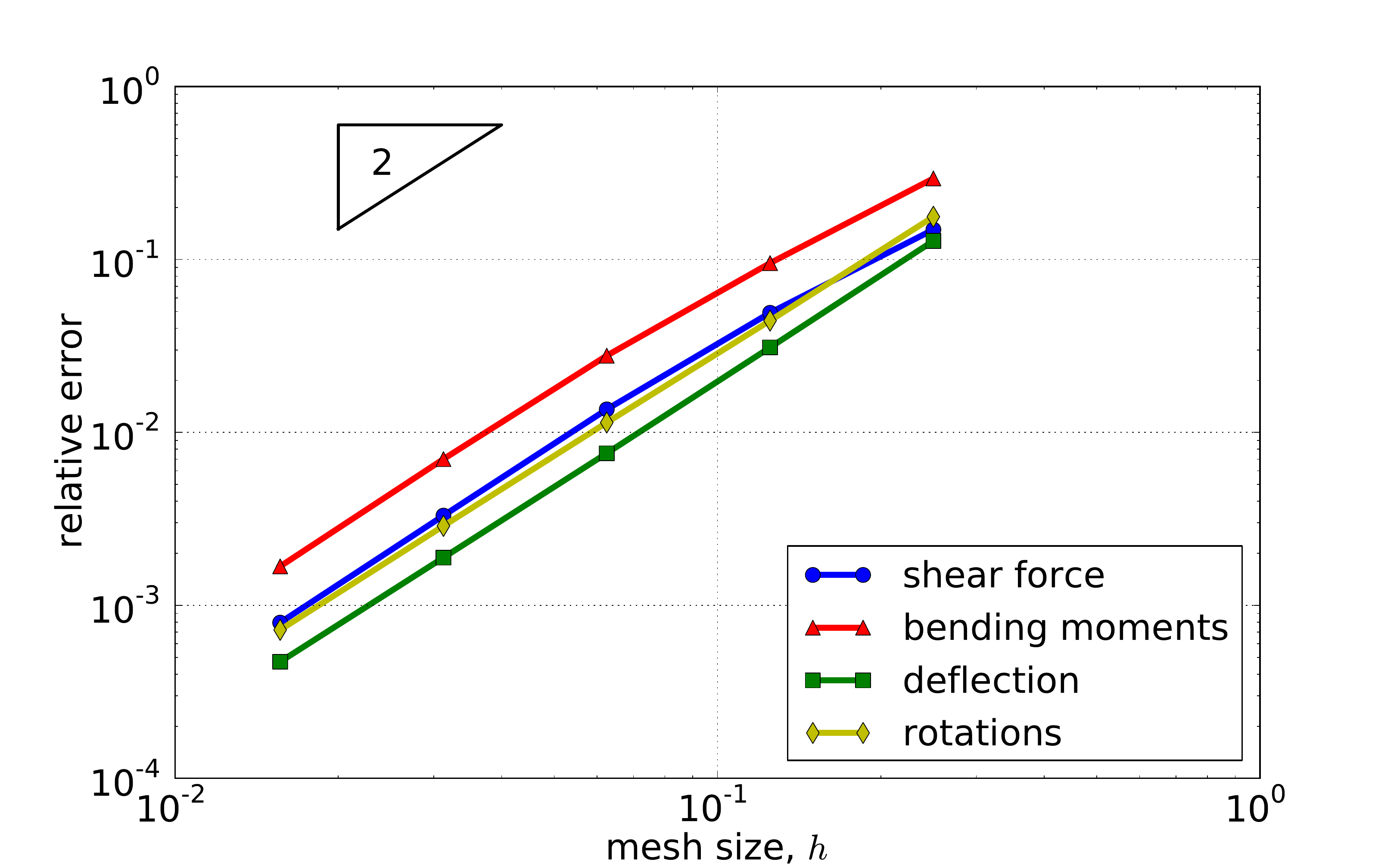}}\caption{Convergence at $t=1/10$. Uniform versus trapezoidal mesh sequence.}
\label{fig:PlateErrors_10}
\end{figure}
\begin{figure}[h]
\centering
\parbox{0.5\linewidth}{\centering Uniform \\ \includegraphics[width=\linewidth]{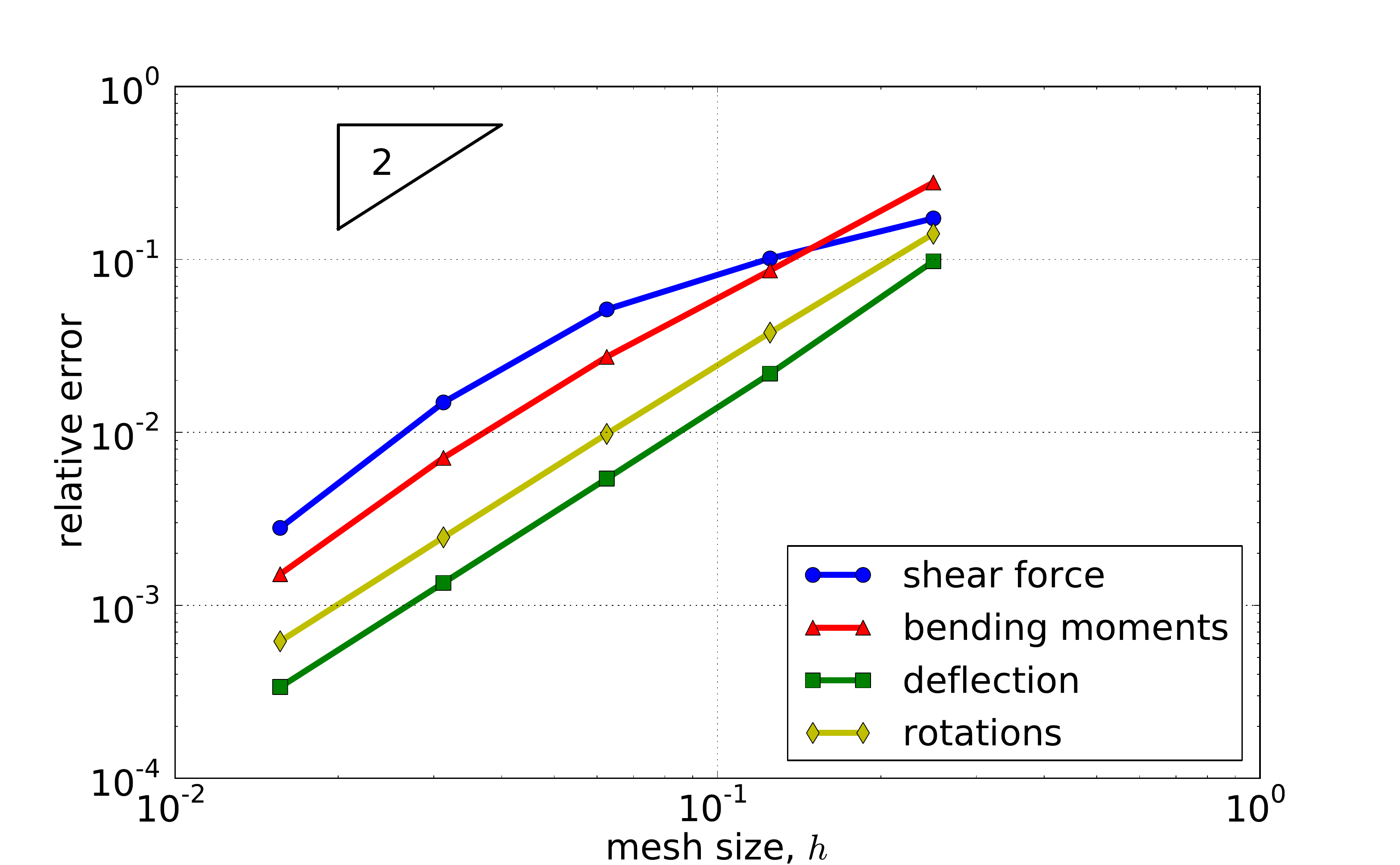}}~
\parbox{0.5\linewidth}{\centering Trapezoidal \\ \includegraphics[width=\linewidth]{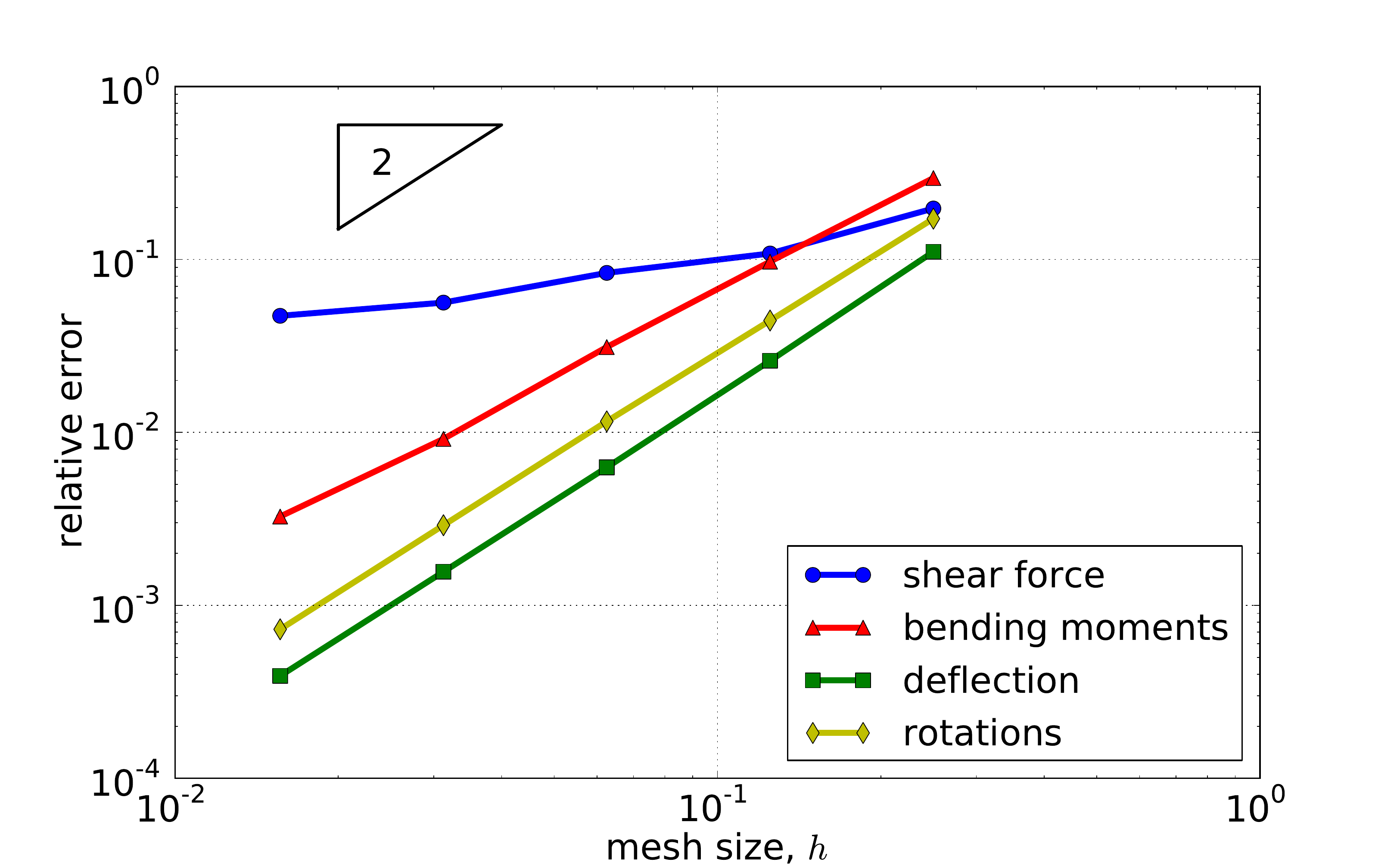}}\caption{Convergence at $t=1/1000$. Uniform versus trapezoidal mesh sequence.}
\label{fig:PlateErrors_1000}
\end{figure}
The results show that 
\begin{enumerate}
\item Optimal quadratic convergence is attained for all quantities on both mesh sequences at $t=1/10$. 
\item Convergence of the shear stress slows down at $t=1/1000$ especially on the trapezoidal mesh sequence. However, a relative error of less than 10 percent is attained also at the $16 \times 16$ trapezoidal mesh.
\end{enumerate} 

Finally, we show in Figs.~\ref{fig:shear forces}--\ref{fig:deflection} contour plots of all quantities of interest at $t=1/1000$ obtained with DPG by using a fine mesh. The good approximation quality of all quantities makes prediction of the values and the locations of maximum stresses straightforward.

\begin{figure}
\centering
\includegraphics[width = 0.33\textwidth]{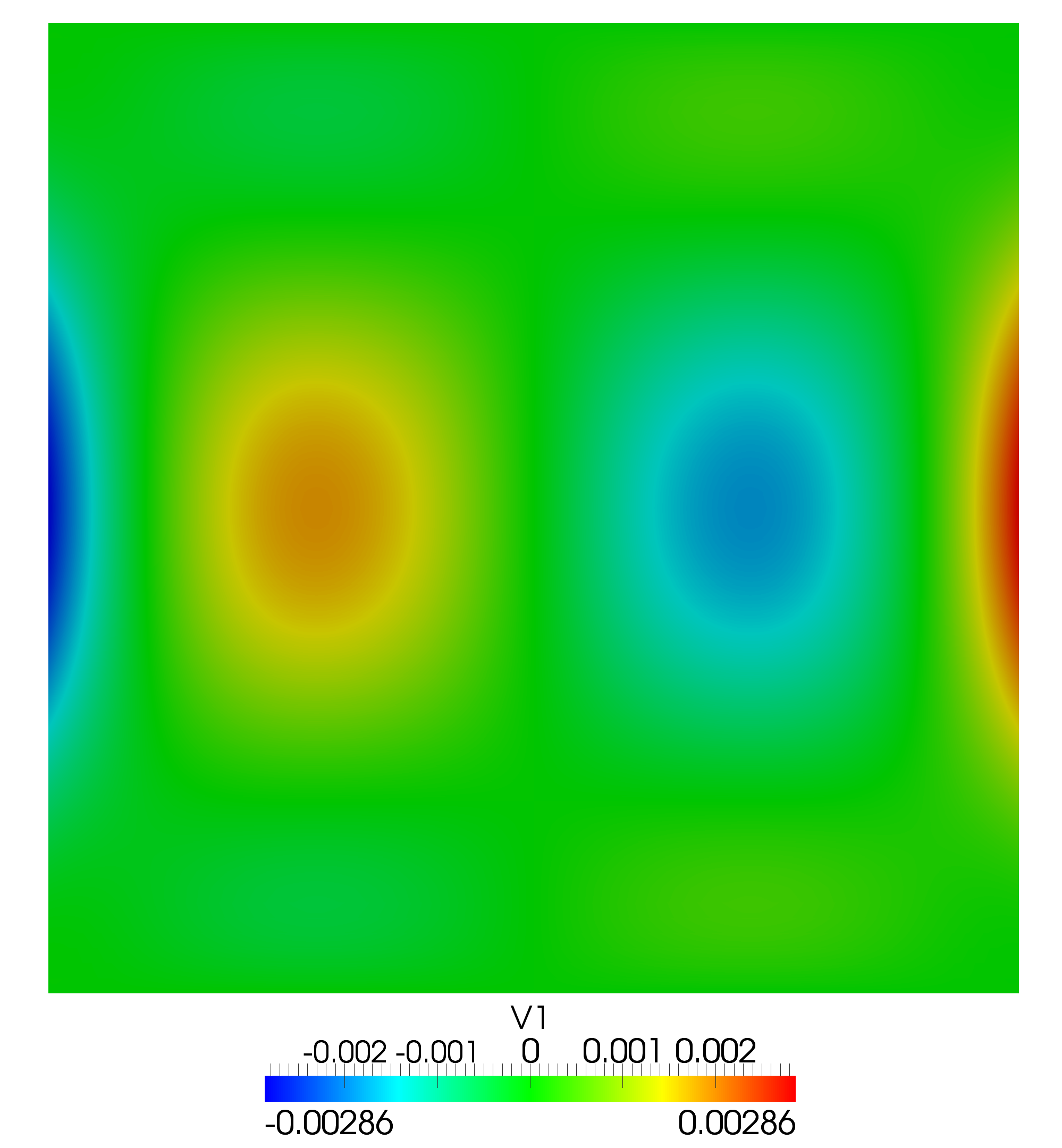}~\includegraphics[width = 0.33\textwidth]{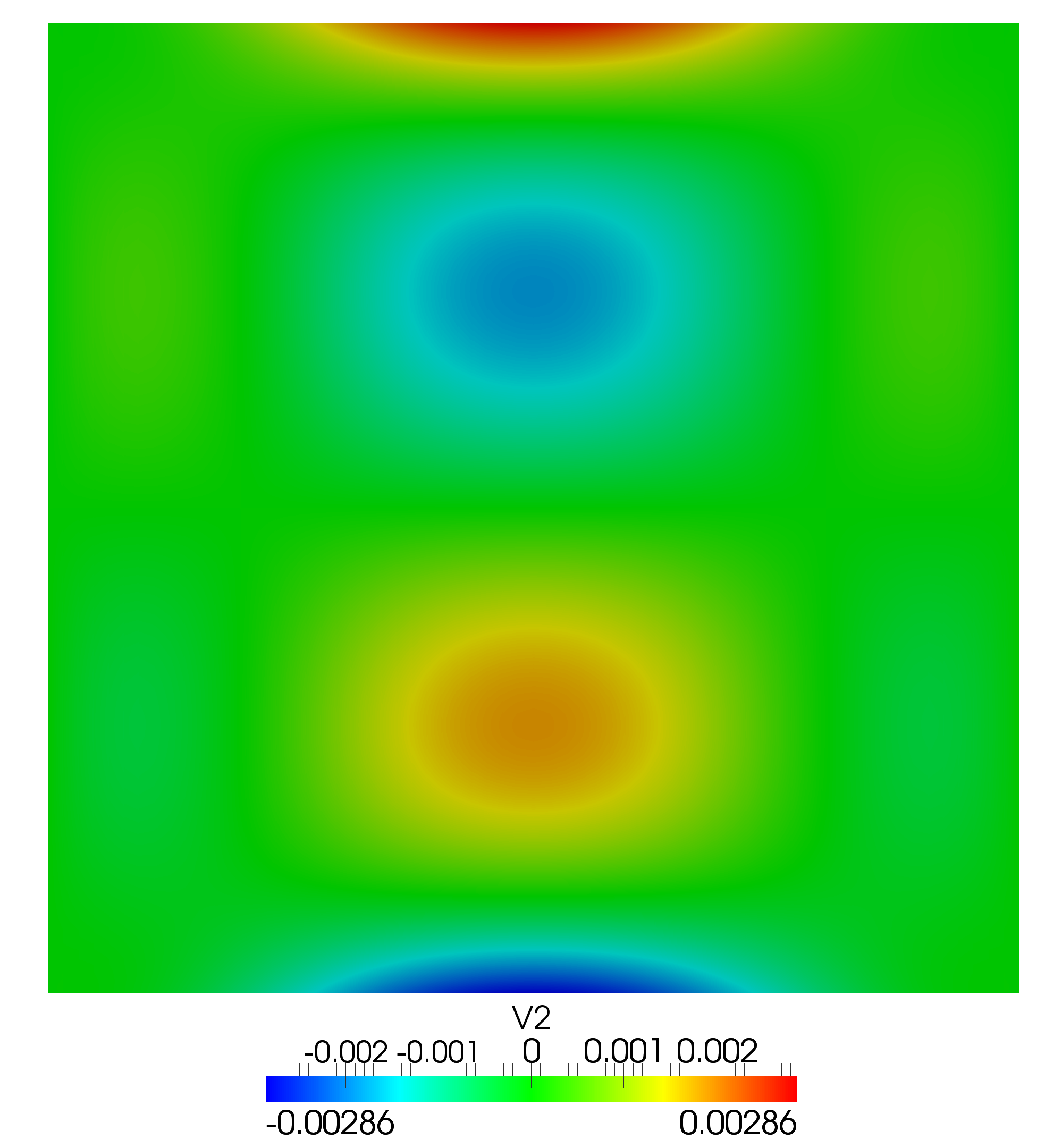}
\caption{Shear forces at $t=1/1000$.}
\label{fig:shear forces}
\end{figure}
\begin{figure}
\centering
\includegraphics[width = 0.33\textwidth]{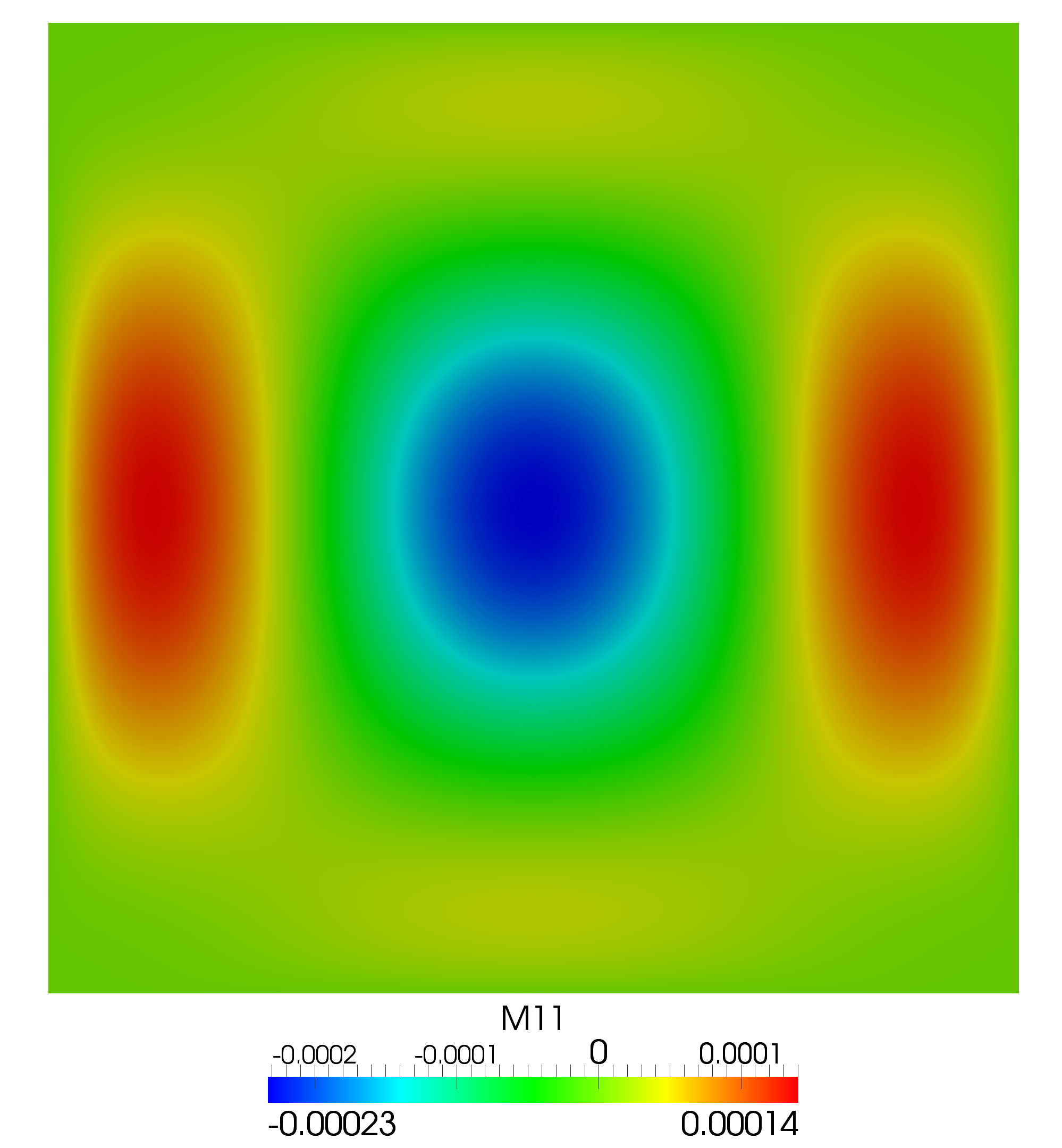}~
\includegraphics[width = 0.33\textwidth]{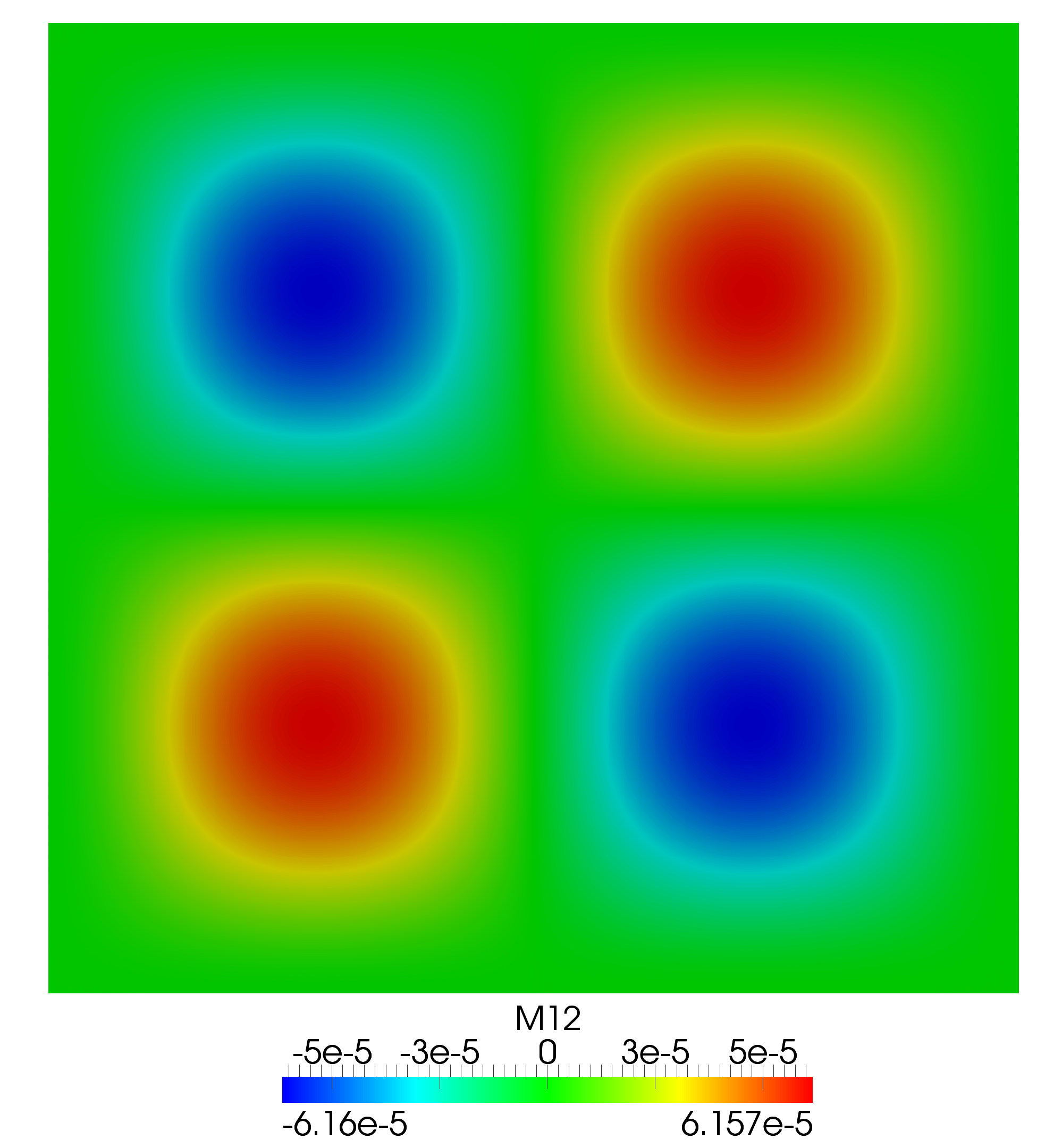}\\
\includegraphics[width = 0.33\textwidth]{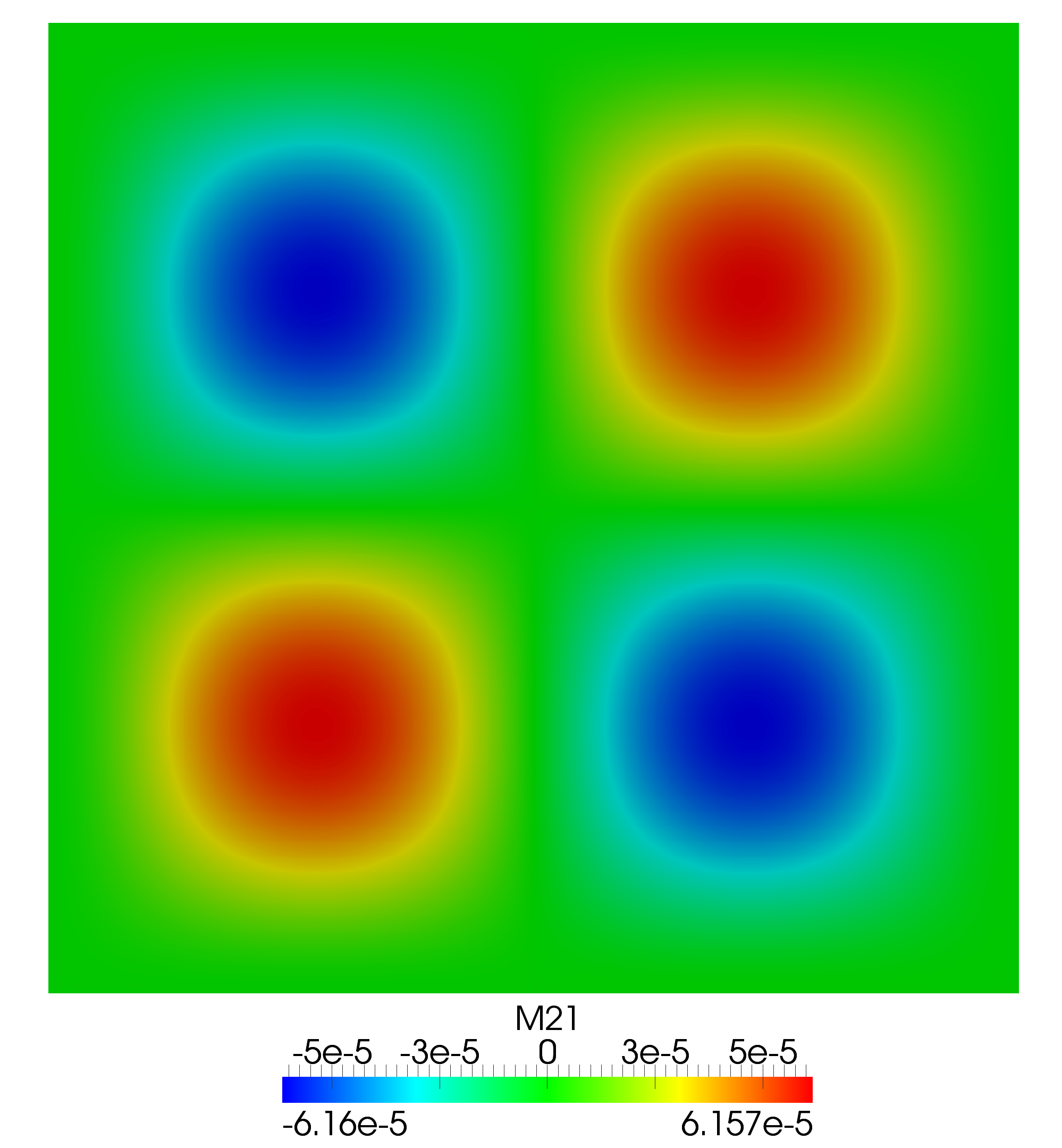}~
\includegraphics[width = 0.33\textwidth]{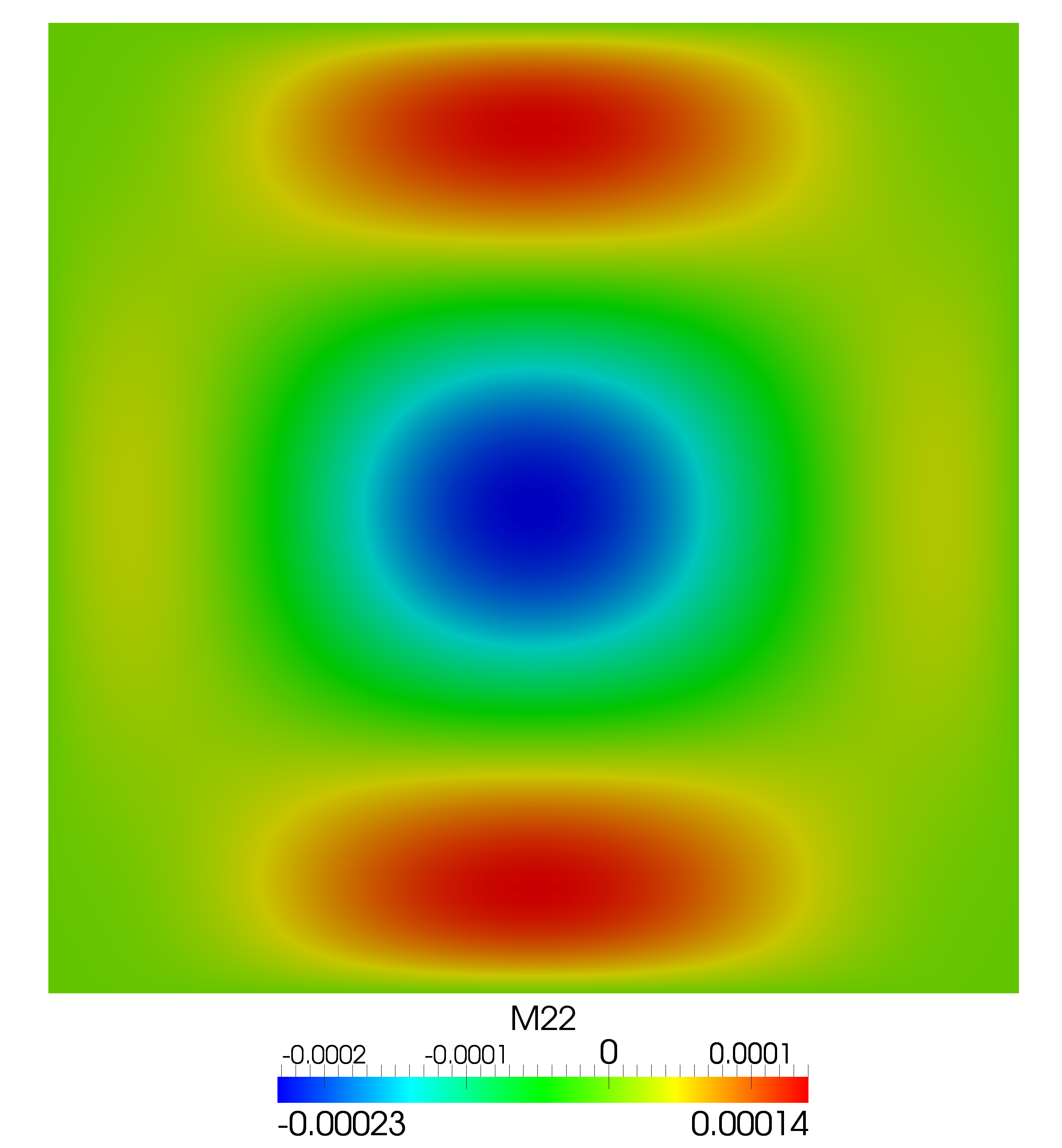}
\caption{Bending moments at $t=1/1000$.}
\end{figure}
\begin{figure}
\centering
\includegraphics[width = 0.33\textwidth]{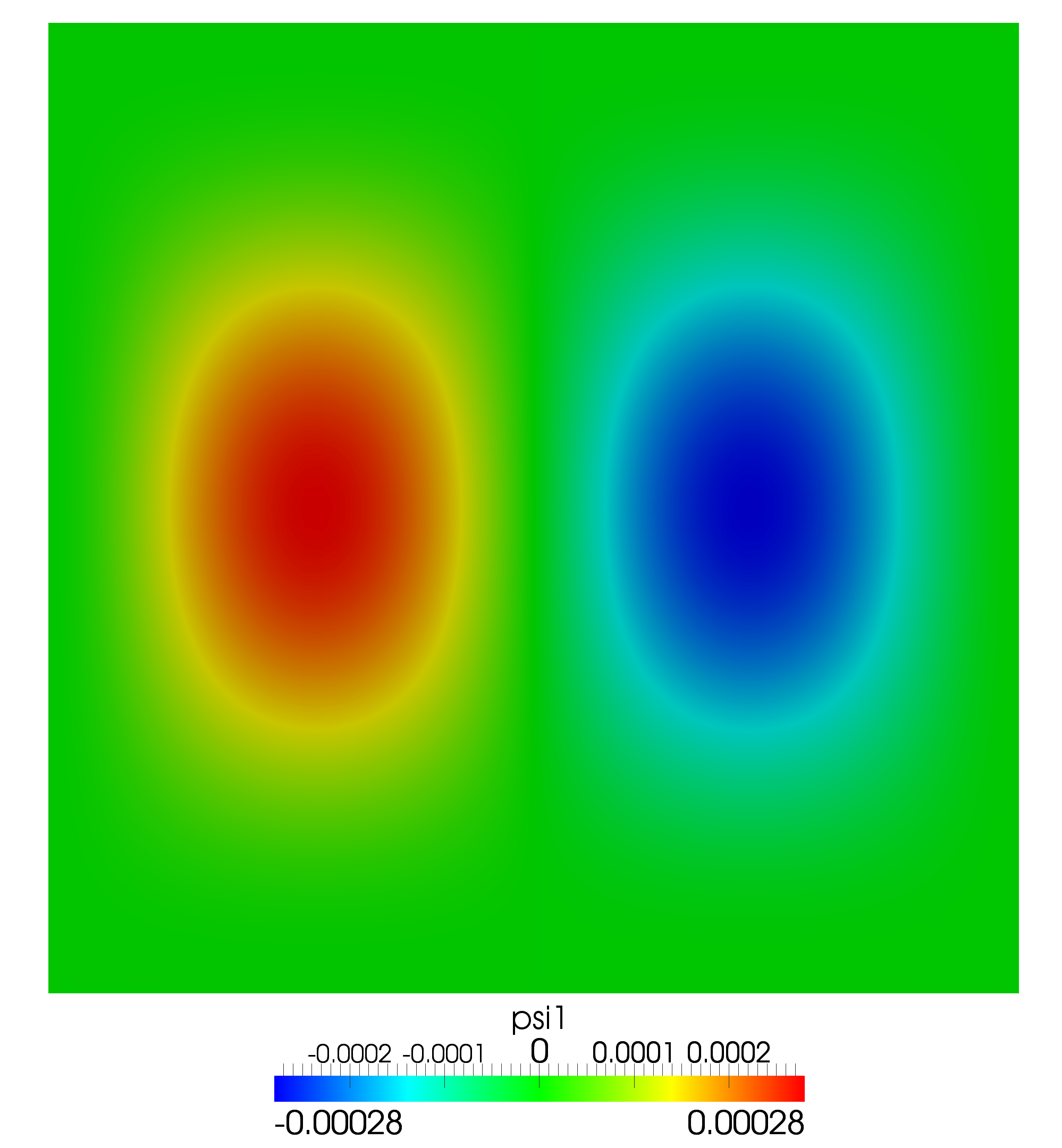}~
\includegraphics[width = 0.33\textwidth]{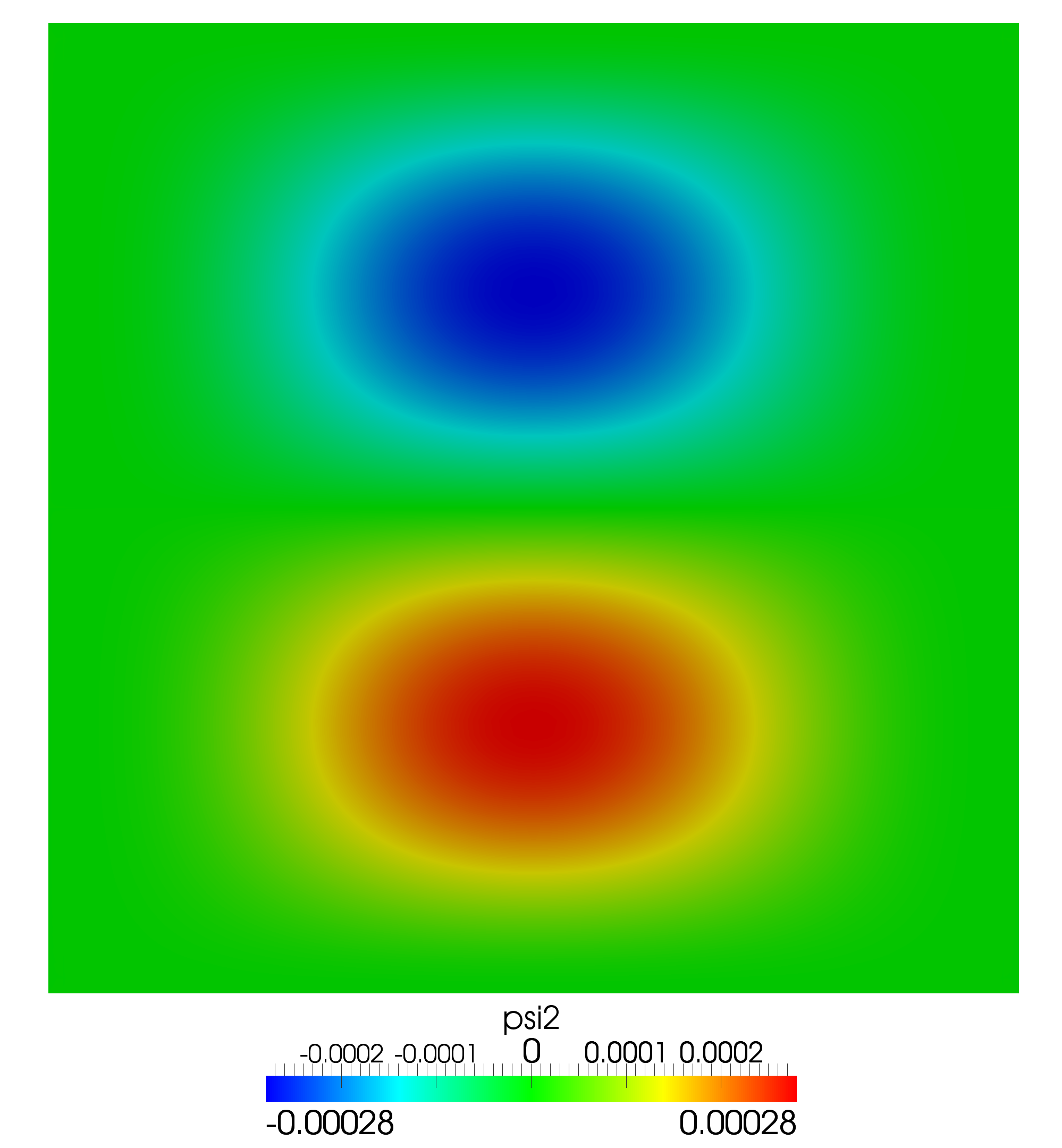}
\caption{Rotations at $t=1/1000$.}
\end{figure}
\begin{figure}
\centering
\includegraphics[width = 0.33\textwidth]{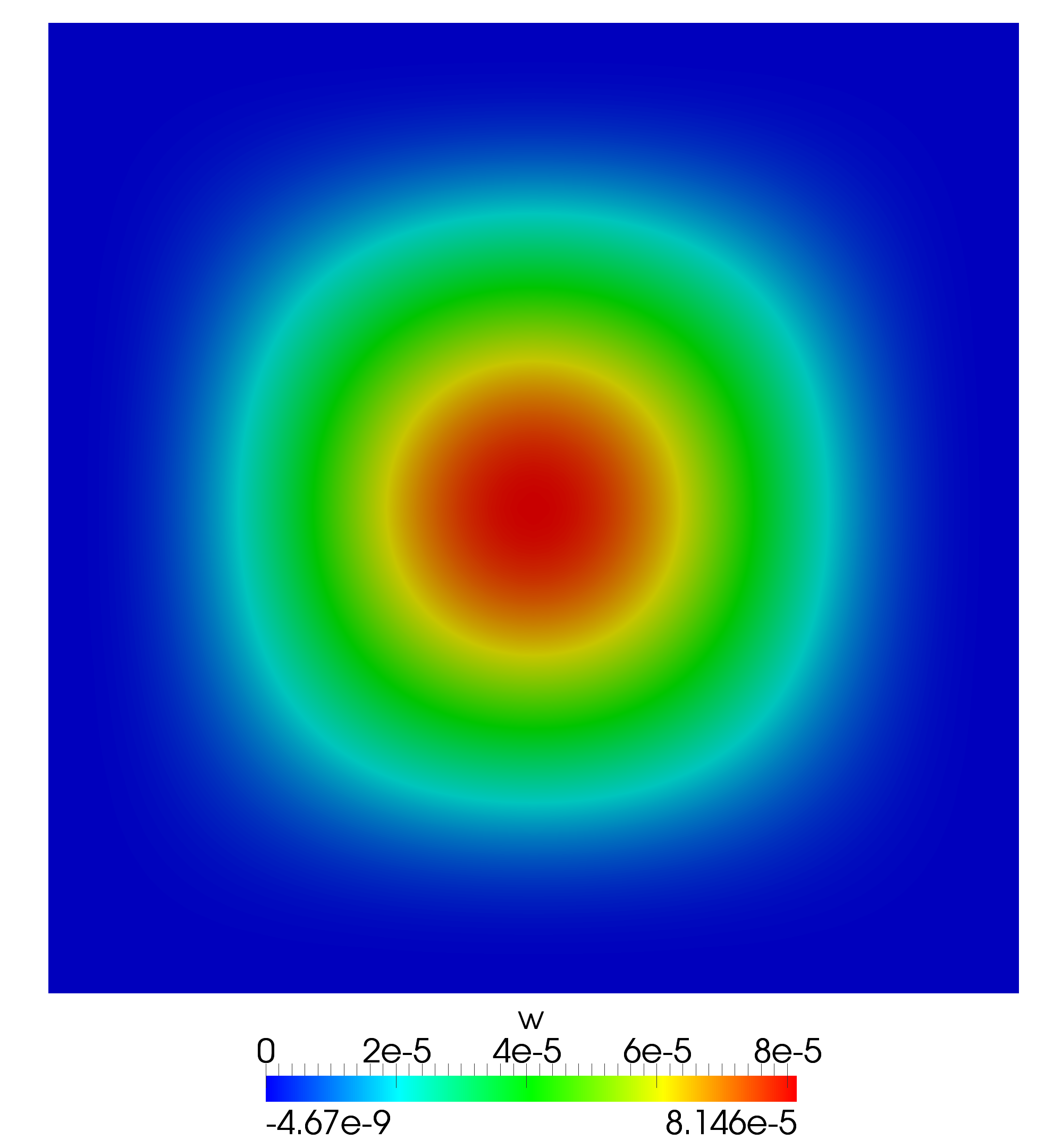}
\caption{Transverse deflection at $t=1/1000$.}
\label{fig:deflection}
\end{figure}

\section{Concluding Remarks} \label{sec:conclusions}
We have analyzed the discontinuous Petrov-Galerkin finite element method in the Reissner-Mindlin plate bending problem. The formulation is based on a piecewise polynomial approximation using quadrilateral scalar and vector finite elements of degree $p$ for all quantities of interest (shear stress, bending moment, transverse deflection, rotation). In addition, the resultant tractions and the kinematic variables are approximated on the mesh skeleton by polynomials of degree $p$ and $p+1$, respectively.  

We have showed that the non-standard variational formulation underlying the DPG method is well-posed. Based on that result, we have showed that a discretization where the test functions are approximated in an enriched finite element space of degree $p+3$ is stable as well and leads to optimal order of convergence in the $L_2$ norm for all variables. However, the theoretical stability estimate breaks down at the limit of zero thickness and therefore the final error bound becomes amplified by the factor $t^{-1}$. Our numerical results indicate that some error amplification indeed occurs for the shear force, but the obtained stress values are relatively accurate even on severely distorted meshes. Future work includes formulation of the algorithm for more general geometries and an evaluation of the computational cost and robustness in comparison with other type of formulations. 

\bibliographystyle{acm}
\bibliography{BibTeX-DPG_for_Plates}

\end{document}